\newcommand{\thickhline}{%
    \noalign {\ifnum 0=`}\fi \hrule height 1pt
    \futurelet \reserved@a \@xhline
}
\newcolumntype{"}{@{\hskip\tabcolsep\vrule width 1pt\hskip\tabcolsep}}
\newtheorem{theorem}{Theorem}[section]
\newtheorem{lemma}[theorem]{Lemma}
\theoremstyle{definition}
\newtheorem{corollary}[theorem]{Corollary}
\newtheorem{proposition}[theorem]{Proposition}
\newtheorem{example}[theorem]{Example}
\theoremstyle{remark}
\newtheorem{remark}[theorem]{Remark}
\numberwithin{equation}{section}
\newcommand{\al}{\alpha}
\begin{document}

\title{On Pseudo Symmetric Monomial Curves}

\author{Mesut \c{S}ah\.{i}n}
\address{Department of Mathematics,
Hacettepe University, Ankara,  06800 Turkey}
\email{mesut.sahin@hacettepe.edu.tr}
\author{N\.{i}l \c{S}ah\.{i}n}
\address{Department of Industrial Engineering, Bilkent University, Ankara, 06800 Turkey}
\email{nilsahin@bilkent.edu.tr}

\thanks{The authors were supported by the project 114F094 under the program 1001 of the
Scientific and Technological Research Council of Turkey.}

\subjclass[2000]{Primary 13H10, 14H20; Secondary 13P10}
\keywords{Hilbert function, tangent cone, monomial
curve, numerical semigroup, free resolution, Rossi's
conjecture,indispensable binomial}

\date{\today}

\commby{}

\dedicatory{}

\begin{abstract}
We study monomial curves, toric ideals and monomial algebras associated to $4$-generated pseudo symmetric numerical semigroups. Namely, we determine indispensable binomials of these toric ideals, give a characterization for these monomial algebras to have strongly indispensable minimal graded free resolutions. We also characterize when the tangent cones of these monomial curves at the origin are Cohen-Macaulay. \end{abstract}

\maketitle

\section{Introduction}
Characterising numerical functions that may be Hilbert functions of one dimensional Cohen-Macaulay local rings is a hard and still open question of local algebra, see \cite{rossi}. A necesseary condition for the characterization is provided by Sally's conjecture that \textit{the Hilbert function of a one dimensional Cohen-Macaulay local ring with small enough embeddding dimension is non-decreasing.} This conjecture is obvious in embedding dimension one, proved in embedding dimensions two by Matlis \cite{matlis} and three by Elias \cite{elias}. For embedding dimension $4$, Gupta and Roberts gave counterexamples in \cite{GuRo}, and for each embedding dimension greater than $4$, Orecchia gave counterexamples in \cite{O}. Local rings of monomial curves provided many affirmative answers, see e.g. \cite{CJZ,DMM,JZ,PT} and references therein. On the other hand, counterexamples were given only in affine $10$-space by Herzog and Waldi in \cite{HW} and in affine $12$-space by Eakin and Sathaye in \cite{ES}, and most recently, Oneto et al. \cite{OST,OT} announced some methods for producing Gorenstein monomial curves whose tangent cones have decreasing Hilbert functions. However, the problem is still open for monomial curves in $n$-space, where $3<n<10$. As the original conjecture predicts that the embedding dimension $n$ should be small and $4$ is the first case, it is natural to focus on monomial curves in $4$-space. Arslan and Mete gave an affirmative answer to the conjecture for local rings corresponding to $4$-generated symmetric semigroups in \cite{pf} under a numerical condition by proving that the tangent cone is Cohen-Macaulay. Taking the novel aproach to use indispensable binomials in the toric ideal, Arslan et al. refined in \cite{AKN} this by characterising Cohen-Macaulayness of the tangent cone completely. As symmetric and pseudo symmetric semigroups are maximal with respect to inclusion with fixed genus, see \cite{BDF}, the second interesting case is the class of $4$-generated pseudo symmetric semigroups which is the content of the present paper. We give characterizations under which the tangent cone is Cohen-Macaulay. This reveals how nice the singularity at the origin is and verifies Sally's conjecture by \cite{garcia}. It also reduces 
the computation of the Hilbert function to that of its Artinian reduction which have only a finite number of nonzero values, see \cite{Sally}. Our criteria for the Cohen-Macaulayness is in terms of the $5$ integers determining the semigroup, so they can be used in principal to construct counterexamples if there are any.
In order to get these conditions we use indispensable binomials in the toric ideal. Motivated originally from its applications in Algebraic Statistics many authors have studied the concept of indispensability, see e.g. \cite{tak} and \cite{CKT,goj,koj} and later strong indispensability, see \cite{barucci,haraCM,haraJA}. In order to state our results more precisely we introduce some notations.

Let $n_1,\dots, n_4$ be positive integers with $\gcd(n_1,\dots,n_4)=1$. Then the numerical semigroup $S=\langle n_1,\dots,n_4 \rangle$ is defined to be the set $\{u_1n_1+\cdots+u_4n_4\ |\ u_i\in \mathbb{N}\}$. Let $K$ be a field and $K[S]=K[t^{n_1},\ldots,t^{n_4}]$   be the semigroup ring of $S$, then $K[S]\simeq A/I_S$ where, $A=K[X_1,\ldots,X_4]$ and $I_S$ is the kernel of the surjection
$A \stackrel{\phi_0}{\longrightarrow} K[S]$, where $X_i\mapsto t^{n_i}$. 

Pseudo Frobenious numbers of $S$ are defined to be the elements of the set $PF(S)=\{n\in \mathbb{Z}-S\ |\ n+s \in S \hbox{ for all } s\in S-\{ 0\} \}$. The largest pseudo Frobenious number not belonging to $S$ is called the Frobenious number and is denoted by $g(S)$. S is called pseudo symmetric if $PF(S)=\{g(S)/2,g(S)\}$, see \cite[Chapter 3]{RG} or \cite{BDF}. By \cite[Theorem 6.5, Theorem 6.4]{komeda}, the semigroup 
$S$ is pseudo symmetric if and only if there are integers $\alpha_i>1$,
$1\le i\le4$, and $\alpha_{21}>0$, with $\alpha_{21}<\alpha_1$,
such that $n_1=\alpha_2\alpha_3(\alpha_4-1)+1$,
$n_2=\alpha_{21}\alpha_3\alpha_4+(\alpha_1-\alpha_{21}-1)(\alpha_3-1)+\alpha_3$,
$n_3=\alpha_1\alpha_4+(\alpha_1-\alpha_{21}-1)(\alpha_2-1)(\alpha_4-1)-\alpha_4+1$,
$n_4=\alpha_1\alpha_2(\alpha_3-1)+\alpha_{21}(\alpha_2-1)+\alpha_2$. 

From now on, $S$ is assumed to be a pseudo symmetric numerical semigroup. Then, by \cite{komeda},
$K[S]=A/(f_1,f_2,f_3,f_4,f_5)$, where
\begin{eqnarray*} f_1&=&X_1^{\alpha_1}-X_3X_4^{\alpha_4-1},  \quad \quad
f_2=X_2^{\alpha_2}-X_1^{\alpha_{21}}X_4, \quad
f_3=X_3^{\alpha_3}-X_1^{\alpha_1-\alpha_{21}-1}X_2,\\
f_4&=&X_4^{\alpha_4}-X_1X_2^{\alpha_2-1}X_3^{\alpha_3-1}, \quad 
f_5=X_3^{\alpha_3-1}X_1^{\alpha_{21}+1}-X_2X_4^{\alpha_4-1}.
\end{eqnarray*}

In section two, we determine indispensable binomials of $I_S$ and prove that $K[S]$ has a strongly indispensable minimal $S$-graded free resolution if and only if $\al_4>2$ and $\al_1-\al_{21}>2$, see Theorem \ref{main}, filling a missing case in \cite{barucci}.

  In section three, we consider the affine curve $C_S$ with parametrization $$X_1=t^{n_1},\ X_2=t^{n_2},\ X_3=t^{n_3},\ X_4=t^{n_4} $$ corresponding to $S$. Recall  that the local ring corresponding to
the monomial curve $C_S$ is $R_S=K[[t^{n_1},\dots,t^{n_4}]]$ and its
Hilbert function is defined as the Hilbert function of its associated graded ring,
$gr_m(K[[t^{n_1},\dots,t^{n_4}]])$, which is isomorphic to the ring
$K[S]/{I_S}_{*}$. Here, ${I_S}_{*}$ is the defining ideal of
the tangent cone of $C_S$ at the origin and is generated by the homogeneous summands $f_*$ of the elements $f \in I_S$. We characterize when the tangent cone of $C_S$ is Cohen-Macaulay in terms of the defining integers $\al_i$ and $\al_{21}$. As a byproduct of our proofs, we provide explicit generating sets for Cohen-Macaulay tangent cones. 

\section{indispensability} In this section, we determine the indispensable binomials in $I_S$ and characterize the conditions under which $K[S]$ has a strongly indispensable minimal $S$-graded free resolution. First, recall some notions from \cite{CKT}. The $S$-degree of a monomial is defined to be $\deg_S(X_1^{u_1}X_2^{u_2}X_3^{u_3}X_4^{u_4})=\sum_{i=1}^{4}u_in_i \in S$. Let $V( d)$ be the set of monomials of $S$-degree $d$. Denote by $G(d)$ the graph with vertices the elements of $V( d)$ and edges $\{m,n\} \subset V( d)$ such that the binomial $m-n$ is generated by binomials in $I_S$ of $S$-degree strictly smaller than $d$. In particular, when $\gcd(m,n) \neq 1$, $\{m,n\}$ is an edge of $G( d)$. $d \in S$ is called a Betti $S$-degree if there is a minimal generator of $I_S$ of $S$-degree $d$ and $\beta_d$ is the number of times $d$ occurs as a Betti $S$-degree. Both the set $B_S$ of Betti $S$-degrees and $\beta_d$ are invariants of $I_S$. $S$-degrees of binomials in $I_S$ which are not comparable with respect to $<_S$ constitute a subset denoted by $M_S$ whose elements are called minimal binomial $S$-degrees, where $s_1<_S s_2$ if $s_2-s_1 \in S$. In general,  $M_S\subseteq B_S$. By Komeda's result, $B_S=\{d_1,d_2,d_3,d_4,d_5\}$ if $d_i$'s are all distinct, where $d_i$ is the $S$-degree of $f_i$, for $i=1,\dots,5$. A binomial is called indispensable if it appears in every minimal generating set of $I_S$. The following useful observation to detect indispensable binomials is not explicitly stated in \cite{CKT}.
\begin{lemma}\label{kayip} A binomial of $S$-degree $d$ is indispensable if and only if $\beta_d=1$ and $d\in M_S$. 
\end{lemma}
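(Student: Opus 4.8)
The plan is to characterize indispensability through the combinatorial structure encoded by the graphs $G(d)$, following the framework of \cite{CKT}. The key conceptual link is that a binomial $m-n$ of $S$-degree $d$ is indispensable precisely when it is forced to appear in every minimal generating set, and minimal generators of $S$-degree $d$ correspond to the connected components of $G(d)$: the number $\beta_d$ of minimal generators in degree $d$ equals one less than the number of connected components of $G(d)$. First I would prove the forward direction. If a binomial of $S$-degree $d$ is indispensable, then it is in particular a minimal generator, so $d\in B_S$; I must show $\beta_d=1$ and $d\in M_S$. If $\beta_d>1$, then $G(d)$ has at least three connected components, and one can build different minimal generating sets by choosing different spanning structures among the components (choosing which chords across components to include), so no single binomial in degree $d$ is forced, contradicting indispensability. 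For $d\in M_S$: if $d\notin M_S$, then $d$ is comparable to some strictly smaller Betti degree $d'<_S d$, and I would argue that this comparability lets one rewrite a generator of degree $d$ using lower-degree relations, again allowing a substitution that avoids any prescribed binomial.

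For the reverse direction, suppose $\beta_d=1$ and $d\in M_S$. Since $\beta_d=1$, the graph $G(d)$ has exactly two connected components, so every minimal generating set contains exactly one binomial $m-n$ of $S$-degree $d$ with $m$ and $n$ lying in the two different components. The task is to show that this binomial is the \emph{same} in every minimal generating set, i.e.\ that $V(d)$ consists of exactly two vertices, forcing $m$ and $n$ to be uniquely determined. This is where the hypothesis $d\in M_S$ does the real work: minimality with respect to $<_S$ prevents any monomial of $S$-degree $d$ from being divisible by a monomial whose complementary factor has positive $S$-degree arising from a smaller relation, which would otherwise create additional vertices and additional choices. I would translate $d\in M_S$ into the statement that no monomial of degree $d$ shares a nontrivial common factor with another, pinning down $|V(d)|=2$.

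The main obstacle I anticipate is the reverse direction's uniqueness claim: showing that $\beta_d=1$ together with $d\in M_S$ forces $V(d)$ to have exactly two elements, rather than merely having two connected components that could each contain several monomials (which would permit several distinct binomials spanning the two components, defeating indispensability). The crux is to exploit the definition of $M_S$ carefully: if two monomials $m_1,m_2$ lay in the same component, the edge between them would come from a strictly smaller $S$-degree relation, and tracing that relation would exhibit $d$ as comparable to a smaller Betti degree, contradicting $d\in M_S$. Making this rewriting argument precise—connecting the existence of edges in $G(d)$ to comparability in $<_S$—is the technical heart, and I would handle it by induction on the $S$-degree, using that edges are generated by binomials of strictly smaller $S$-degree.
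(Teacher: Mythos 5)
Your proposal is correct and takes essentially the same route as the paper: both reduce the statement to the connected-component structure of the graphs $G(d)$ of \cite{CKT}, identifying $\beta_d=1$ with ``$G(d)$ has exactly two connected components'' and $d\in M_S$ with ``all components of $G(d)$ are singletons,'' so that indispensability becomes ``two components, both singletons.'' The only difference is presentational: the paper obtains these three equivalences by citing \cite[Corollary 2.10]{CKT}, the remark following \cite[Corollary 2.8]{CKT}, and \cite[Proposition 2.4]{CKT}, whereas you re-derive them via the spanning-structure and common-factor/rewriting arguments that underlie those cited results.
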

\begin{proof} A binomial of $S$-degree $d$ is indispensable if and only if $G(d)$ has two connected components which are singletons, by \cite[Corollary 2.10]{CKT}. From the paragraph just after \cite[Corollary 2.8]{CKT}, the condition that $G(d)$ has two connected components is equivalent to $\beta_d=1$. Finally, \cite[Proposition 2.4]{CKT} completes the proof, since the connected components of $G(d)$ are singletons if and only if $d\in M_S$. 
\end{proof}

We use the following many times in the sequel.
\begin{lemma}\label{lemmaS}
If $0< v_k <\alpha_k$ and $0< v_l<\alpha_l$, for $k \neq l \in\{1,2,3,4\}$, then $v_kn_k-v_ln_l \notin S$. 
\end{lemma}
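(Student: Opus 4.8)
The plan is to reduce the statement to a unique–factorization claim and then read that claim off from the combinatorics of the generators $f_1,\dots,f_5$. Suppose, for contradiction, that $v_kn_k-v_ln_l=s\in S$ for some $k\neq l$ with $0<v_k<\alpha_k$ and $0<v_l<\alpha_l$. Writing $s=\sum_i w_in_i$ with $w_i\in\mathbb{N}$ gives $v_kn_k=v_ln_l+\sum_i w_in_i$, so the monomial $N=X_l^{v_l}\prod_i X_i^{w_i}$ lies in $V(v_kn_k)$ together with $X_k^{v_k}$. Since $l\neq k$, the $X_l$-degree of $N$ is at least $v_l\geq 1$, whereas that of $X_k^{v_k}$ is $0$; hence $N\neq X_k^{v_k}$ and $|V(v_kn_k)|\geq 2$. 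Thus it suffices to prove that $V(v_kn_k)=\{X_k^{v_k}\}$ whenever $0<v_k<\alpha_k$, i.e. that $v_kn_k$ has a unique factorization in $S$, which contradicts the existence of $N$.

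To establish uniqueness I would use that $\{f_1,\dots,f_5\}$ generates $I_S$, together with the standard correspondence between generating sets of a toric ideal and connectivity of its fibers (implicit in the discussion of $G(d)$ following \cite[Corollary 2.8]{CKT}): if neither of the two monomials of any $f_j$ divides a monomial $m\in V(d)$, then no $f_j$ can be applied to $m$, so $m$ is isolated in the fiber and connectivity forces $V(d)=\{m\}$. I would apply this to $m=X_k^{v_k}$. The ten monomials occurring in $f_1,\dots,f_5$ are the four pure powers $X_1^{\alpha_1},X_2^{\alpha_2},X_3^{\alpha_3},X_4^{\alpha_4}$ together with the six genuinely multivariate monomials $X_3X_4^{\alpha_4-1}$, $X_1^{\alpha_{21}}X_4$, $X_1^{\alpha_1-\alpha_{21}-1}X_2$, $X_1X_2^{\alpha_2-1}X_3^{\alpha_3-1}$, $X_1^{\alpha_{21}+1}X_3^{\alpha_3-1}$ and $X_2X_4^{\alpha_4-1}$. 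A multivariate monomial cannot divide the pure power $X_k^{v_k}$, and the only pure power of $X_k$ among the generators is $X_k^{\alpha_k}$, which does not divide $X_k^{v_k}$ because $v_k<\alpha_k$. Hence no $f_j$ applies to $X_k^{v_k}$, it is isolated, and $V(v_kn_k)=\{X_k^{v_k}\}$, giving the desired contradiction.

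The main obstacle is the bookkeeping in the previous paragraph, namely verifying that each of the six mixed monomials really does involve at least two variables. This uses the standing hypotheses $\alpha_i>1$ (so that $\alpha_4-1,\alpha_2-1,\alpha_3-1$ are positive) and $\alpha_{21}>0$, but the delicate case is $X_1^{\alpha_1-\alpha_{21}-1}X_2$: it fails to be multivariate exactly when $\alpha_1-\alpha_{21}-1=0$, in which case $f_3$ degenerates to $X_3^{\alpha_3}-X_2$, forcing $n_2=\alpha_3n_3$ and contradicting that $n_1,\dots,n_4$ minimally generate $S$. I would therefore first record the inequality $\alpha_1-\alpha_{21}-1\geq 1$ as a consequence of minimal $4$-generation, and only then run the divisibility argument. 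A secondary point to handle with care is the exact form of the connectivity statement borrowed from \cite{CKT}, to be sure that ``no applicable binomial'' yields an isolated vertex of the honest fiber $V(d)$ and not merely of the auxiliary graph $G(d)$ built from strictly-smaller degrees.
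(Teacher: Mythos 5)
Your proposal is correct, but it takes a genuinely different route from the paper's. The paper argues purely at the numerical level: assuming $v_kn_k-v_ln_l=\sum_i u_in_i$, it rearranges to $(v_k-u_k)n_k=(v_l+u_l)n_l+u_sn_s+u_rn_r$, rules out $v_k-u_k<0$ (a negative integer cannot lie in $S$) and $v_k-u_k=0$ (since $v_l>0$), and then contradicts the minimality of $\alpha_k$ as the least positive integer $h$ with $hn_k\in\langle n_l,n_s,n_r\rangle$ --- a fact taken as part of Komeda's setup. You instead reduce the lemma to the singleton-fiber statement $V(v_kn_k)=\{X_k^{v_k}\}$ and derive it from the fact that $f_1,\dots,f_5$ generate $I_S$, via the observation that no monomial of any $f_j$ divides the pure power $X_k^{v_k}$; your care with the degenerate case $\alpha_1-\alpha_{21}-1=0$ (excluded by minimal $4$-generation) is exactly the right bookkeeping, since otherwise $f_3$ would have a pure-power monomial in $X_2$. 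Your flagged worry about the precise connectivity statement can be bypassed entirely: if $X_k^{v_k}-m'=\sum_j g_jf_j$ with $m'\neq X_k^{v_k}$, then expanding the right-hand side shows that $X_k^{v_k}$ must occur as a product of a monomial of some $g_j$ with a monomial of $f_j$, so some monomial of some $f_j$ divides $X_k^{v_k}$; no Markov-basis or fiber-graph theorem is needed, only that the $f_j$ generate. Comparing the two: the paper's argument is lighter and uniform --- it needs only the minimality of the $\alpha_i$ and no case analysis of which monomials of the $f_j$ are multivariate --- whereas yours trades that for the heavier input of Komeda's presentation of $I_S$, in exchange for a conceptually sharper intermediate result (unique factorization of $v_kn_k$ in $S$), which is in fact equivalent to the minimality property the paper invokes.
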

\begin{proof}
Assume to the contrary that $v_kn_k-v_ln_l \in S$. Then 
$$v_kn_k-v_ln_l =\sum\limits_{i=1}^{4} u_in_i=u_1n_1+u_2n_2+u_3n_3+u_4n_4$$ 
for some non-negative $u_k$'s. 

Hence, $(v_k-u_k)n_k=(v_l+u_l)n_l+u_sn_s+u_rn_r \in \langle n_l,n_s,n_r \rangle$. If $v_k-u_k<0$ then $(v_k-u_k)n_k \in S \cap (-S)$ but this is a contradiction as $S \cap (-S)=\{0\}$. If $v_k-u_k=0$, then $(v_l+u_l)n_l+u_sn_s+u_rn_r=0$ and this is impossible as $v_l$ is positive. That is, $v_k-u_k >0$. This contradicts with the fact that $\alpha_i$ is the smallest positive number with this property as $0<v_i-u_i\leq v_i<\alpha_i$. 
\end{proof}

Now, we determine the minimal binomial $S$-degrees. 
\begin{proposition}\label{minimal}
$M_S=\{d_1,d_2,d_3,d_4,d_5\}$ if $\alpha_1-\alpha_{21} > 2$ and $M_S=\{d_1,d_2,d_3,d_5\}$ if $\alpha_1-\alpha_{21} =2$.
\end{proposition}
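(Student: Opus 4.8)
The plan is to compute the $S$-degrees $d_1,\dots,d_5$ explicitly and then decide, for each pair $d_i,d_j$, whether $d_i -_S d_j := d_i - d_j \in S$ (which would make them comparable under $<_S$ and hence disqualify one from $M_S$). Recall that $d_i = \deg_S(f_i)$, so from the generators we read off $d_1 = \alpha_1 n_1 = n_3 + (\alpha_4-1)n_4$, $d_2 = \alpha_2 n_2 = \alpha_{21} n_1 + n_4$, $d_3 = \alpha_3 n_3 = (\alpha_1 - \alpha_{21} - 1)n_1 + n_2$, $d_4 = \alpha_4 n_4 = n_1 + (\alpha_2 - 1)n_2 + (\alpha_3 - 1)n_3$, and $d_5 = (\alpha_{21}+1)n_1 + (\alpha_3-1)n_3 = n_2 + (\alpha_4 - 1)n_4$. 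Since $M_S$ is by definition the set of $S$-degrees of binomials in $I_S$ that are pairwise incomparable with respect to $<_S$, and $M_S \subseteq B_S = \{d_1,\dots,d_5\}$, the whole problem reduces to pinning down the comparability relations among these five specific degrees.

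My first step is to establish the non-comparabilities that hold for \emph{all} admissible parameters, i.e. the core four degrees $d_1,d_2,d_3,d_5$ that survive in both cases. The main tool here is Lemma~\ref{lemmaS}: to show $d_i - d_j \notin S$ I would write each $d_i$ in one of the two monomial expressions above and form a difference of the shape $v_k n_k - v_l n_l$ with $0 < v_k < \alpha_k$ and $0 < v_l < \alpha_l$, so that Lemma~\ref{lemmaS} applies directly. Because each $f_i$ equates two monomials, each $d_i$ has (at least) two representations, and the art of the proof is choosing the representation of $d_i$ and of $d_j$ that makes the cancellation leave behind exactly such a clean two-term difference with exponents in the required ranges. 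I would go through the pairs systematically, using the bounds $\alpha_i > 1$ and $0 < \alpha_{21} < \alpha_1$ to verify the exponent inequalities.

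The crux of the argument, and the place where the two cases split, is the comparison involving $d_4$. The claim is that $d_4$ is incomparable to the others precisely when $\alpha_1 - \alpha_{21} > 2$, whereas when $\alpha_1 - \alpha_{21} = 2$ the degree $d_4$ becomes comparable to some $d_j$ (so that $f_4$ is no longer a minimal binomial degree) and drops out of $M_S$. So I expect the main obstacle to be the delicate computation showing that $d_4 - d_j \in S$ exactly in the boundary case $\alpha_1-\alpha_{21}=2$: here one must exhibit an explicit nonnegative combination $d_4 - d_j = \sum u_i n_i$ witnessing membership in $S$, and conversely, when $\alpha_1 - \alpha_{21} > 2$, show via Lemma~\ref{lemmaS} (again after a careful choice of representations) that no such membership can occur. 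I anticipate that the relevant relation is $d_4$ versus $d_3$ or $d_1$, since substituting $\alpha_1 - \alpha_{21} = 2$ into the expression for $n_3$ collapses a factor, effectively lowering an exponent to the threshold where Lemma~\ref{lemmaS} no longer forbids membership.

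Finally, I would assemble the pieces: having shown $d_1,d_2,d_3,d_5$ are pairwise incomparable in all cases, and that $d_4$ is incomparable to each of them iff $\alpha_1-\alpha_{21}>2$, I conclude $M_S = \{d_1,d_2,d_3,d_4,d_5\}$ when $\alpha_1-\alpha_{21}>2$ and $M_S = \{d_1,d_2,d_3,d_5\}$ when $\alpha_1-\alpha_{21}=2$. One subtlety to check along the way is that the $d_i$ are genuinely distinct and that $d_4$ really does sit \emph{above} (not merely equal to) the degree it becomes comparable with in the boundary case, so that it is the one removed; I would confirm this by checking the direction of the difference, ensuring $d_4 - d_j \in S$ rather than $d_j - d_4 \in S$, using that $d_4 = \alpha_4 n_4$ is comparatively large.
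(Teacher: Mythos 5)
Your overall strategy coincides with the paper's: list the differences $d_i-d_j$, dispose of most of them with Lemma~\ref{lemmaS}, isolate the comparison of $d_4$ with the other degrees as the place where the case split on $\alpha_1-\alpha_{21}$ occurs, and check the direction of the comparability ($d_4-d_3\in S$, not $d_3-d_4\in S$, in the boundary case). However, the proposal is an outline rather than a proof, and the one concrete methodological claim it makes about the hard step is wrong. The differences $d_3-d_4=n_3-n_1-(\alpha_2-1)n_2$ and $d_4-d_3=n_1+(\alpha_2-1)n_2-n_3$ involve three generators no matter which of the two monomial representations of $d_3$ and $d_4$ you combine, so they cannot be brought into the two-term form $v_kn_k-v_ln_l$ with $0<v_k<\alpha_k$, $0<v_l<\alpha_l$ that Lemma~\ref{lemmaS} requires. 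Your plan to handle the $d_4$ comparisons ``via Lemma~\ref{lemmaS} after a careful choice of representations'' therefore cannot be executed. The paper instead gives bespoke arguments: $d_3-d_4\notin S$ follows from the minimality of $\alpha_3$ (if $n_3-n_1-(\alpha_2-1)n_2=\sum u_in_i$ then $(1-u_3)n_3\in\langle n_1,n_2,n_4\rangle$ with $0<1-u_3<\alpha_3$); and $d_4-d_3\notin S$ for $\alpha_1-\alpha_{21}>2$ requires a several-case analysis (splitting on whether $u_1>0$, then on whether $u_4=0$), each branch of which contradicts the minimality of one of $\alpha_1,\alpha_2,\alpha_3$ or the fact that $n_1\notin\langle n_2,n_3,n_4\rangle$. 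None of this is present in your proposal, and it is precisely the content of the proposition.

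A second, smaller gap: for the boundary case you gesture at ``substituting $\alpha_1-\alpha_{21}=2$ into the expression for $n_3$,'' but the actual witness of membership is the identity $d_4-d_3=(\alpha_2-2)n_2+(\alpha_3-1)n_3-(\alpha_1-\alpha_{21}-2)n_1$, whose $n_1$-coefficient vanishes exactly when $\alpha_1-\alpha_{21}=2$, giving $d_4-d_3=(\alpha_2-2)n_2+(\alpha_3-1)n_3\in S$. You would need to exhibit this identity (or an equivalent explicit nonnegative combination) rather than appeal to a collapse in the formula for $n_3$. The routine part of your plan --- verifying that each of the remaining eight differences among $d_1,d_2,d_3,d_5$ (and $d_1-d_4$, $d_2-d_4$) does land in the two-term form needed for Lemma~\ref{lemmaS} --- is sound and matches the paper, but it too is asserted rather than carried out.
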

\begin{proof} Notice first that $d_1=\alpha_1n_1=n_3+(\alpha_4-1)n_4$,\\
$d_2=\alpha_2n_2=\alpha_{21}n_1+n_4$,\\
$d_3=\alpha_3n_3=(\alpha_1-\alpha_{21}-1)n_1+n_2$,\\
$d_4=\alpha_4n_4=n_1+(\alpha_2-1)n_2+(\alpha_3-1)n_3$,\\
$d_5=(\alpha_{21}+1)n_1+(\alpha_3-1)n_3=n_2+(\alpha_4-1)n_4.$\\
Thus, we observe that\\
$d_1-d_2=(\alpha_1-\alpha_{21})n_1-n_4$\\
$d_1-d_3=(\alpha_{21}+1)n_1-n_2$\\
$d_1-d_4=n_3-n_4$\\
$d_1-d_5=(\alpha_1-\alpha_{21}-1)n_1-(\alpha_3-1)n_3$\\
$d_2-d_3=(\alpha_2-1)n_2-(\alpha_1-\alpha_{21}-1)n_1$\\
$d_2-d_4=n_3-(\alpha_1-\alpha_{21})n_1$\\
$d_2-d_5=(\alpha_2-1)n_2-(\alpha_4-1)n_4$\\
$d_3-d_4=n_3-n_1-(\alpha_2-1)n_2$\\
$d_3-d_5=n_3-(\alpha_{21}+1)n_1$\\
$d_4-d_5=(\alpha_2-1)n_2-\alpha_{21}n_1$.\\
Then, $d_i-d_j=v_kn_k-u_ln_l$ for some $k \neq l \in\{1,2,3,4\}$ with $0< v_k <\alpha_k$ and $0< v_l<\alpha_l$  except for $d_3-d_4$ and $d_4-d_3$. Hence, we can say $d_i-d_j \notin S$ from Lemma \ref{lemmaS} for all $i,j$ except $3$ and $4$.

Assume $d_3-d_4 \in S$. Then $n_3-n_1-(\alpha_2-1)n_2=u_1n_1+u_2n_2+u_3n_3+u_4n_4$ for some non-negative $u_i$'s. So, $(1-u_3)n_3=(1+u_1)n_1+(\alpha_2-1+u_2)n_2+u_4n_4 >0$. This contradicts to $\alpha_3$ being the minimal number with the property $\alpha_3n_3 \in \langle n_1,n_2,n_4 \rangle$, as $0<1-u_3<\alpha_3$. Hence $d_3-d_4$ can not be in $S$.

There are two possibilities for $d_4-d_3$. If $\alpha_1-\alpha_{21} =2$, then we have $d_4-d_3=(\alpha_2-2)n_2+(\alpha_3-1)n_3-(\alpha_1-\alpha_{21}-2)n_1=(\alpha_2-2)n_2+(\alpha_3-1)n_3 \in S$. 

If $\alpha_1-\alpha_{21} > 2$, we show that $d_4-d_3 \notin S$. Assume contrary that $d_4-d_3=n_1+(\alpha_2-1)n_2-n_3=u_1n_1+u_2n_2+u_3n_3+u_4n_4$. Then, $(\alpha_2-1-u_2)n_2=(u_1-1)n_1+(u_3+1)n_3+u_4n_4$. If $u_1 >0$, then $0<\alpha_2-1-u_2<\alpha_2$, since $u_3+1>0$. But this contradicts to the minimality of $\alpha_2$.  Hence $u_1=0$ and $n_1+(\alpha_2-1-u_2)n_2=(u_3+1)n_3+u_4n_4$ with $\alpha_2-1-u_2>0$. ( If $\alpha_2-1-u_2\leq 0$, then $n_1=(u_2+1-\alpha_2)n_2+(u_3+1)n_3+u_4n_4$ and this implies $n_1 \in \langle n_2,n_3,n_4 \rangle$  which can not happen). Then if $u_4=0$, we have $(u_3+1)n_3=n_1+(\alpha_2-1-u_2)n_2$. As $u_3+1<\alpha_3$  gives a contradiction with the minimality of $\alpha_3$, we assume $u_3+1=\alpha \geq \alpha_3$. Then $\alpha_3n_3+(\alpha-\alpha_3)n_3=n_1+(\alpha_2-1-u_2)n_2$ $\Rightarrow (\alpha_1-\alpha_{21}-1)n_1+n_2+(\alpha-\alpha_3)n_3=n_1+(\alpha_2-1-u_2)n_2$ $\Rightarrow (\alpha_1-\alpha_{21}-2)n_1+(\alpha-\alpha_3)n_3=(\alpha_2-2-u_2)n_2$ $\Rightarrow 0< \alpha_2-2-u_2 < \alpha_2$ and this gives a contradiction with the minimality of $\alpha_2$. On the other hand, if $u_4>0$, then $n_1+\alpha_2n_2=(1+u_2)n_2+(u_3+1)n_3+u_4n_4$, and as $\alpha_2n_2=1+\alpha_{21}n_1+n_4$, we have $(1+\alpha_{21})n_1=(1+u_2)n_2+(u_3+1)n_3+(u_4-1)n_4$. As $0<1+\alpha_{21}<\alpha_1$,  this contradicts with the minimality of $\alpha_1$. Hence, $d_4-d_3$ can not be an element of $S$.
\end{proof}

As a consequence, we determine the indispensable binomials in $I_S$. Part of this result is remarked at the end of \cite{koj}.
\begin{corollary}\label{indispensable} Indispensable binomials of $I_S$ are $\{f_1,f_2,f_3,f_4,f_5\}$ if $\alpha_1-\alpha_{21} > 2$ and are $\{f_1,f_2,f_3,f_5\}$ if $\alpha_1-\alpha_{21} =2$.
\end{corollary}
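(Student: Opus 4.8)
The plan is to combine the two preceding results, Lemma~\ref{kayip} and Proposition~\ref{minimal}, essentially as a bookkeeping exercise. Recall that Lemma~\ref{kayip} characterizes indispensable binomials as exactly those of $S$-degree $d$ with $\beta_d=1$ and $d\in M_S$. So the task splits into two independent verifications: first, that each Betti degree $d_i$ occurs with multiplicity one (i.e. $\beta_{d_i}=1$), and second, that $d_i\in M_S$. The second condition is handed to us directly by Proposition~\ref{minimal}, which states $M_S=\{d_1,d_2,d_3,d_4,d_5\}$ when $\al_1-\al_{21}>2$ and $M_S=\{d_1,d_2,d_3,d_5\}$ when $\al_1-\al_{21}=2$. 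Matching these against the criterion immediately produces the two candidate lists of indispensable binomials claimed in the corollary.

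The first step I would carry out is to establish that $\beta_{d_i}=1$ for the relevant indices. By Komeda's result quoted in the introduction, $B_S=\{d_1,\dots,d_5\}$ precisely when the $d_i$ are all distinct, and each $f_i$ is a minimal generator of its own $S$-degree. I would argue that the $d_i$ are pairwise distinct: Proposition~\ref{minimal} already computes all ten differences $d_i-d_j$, and establishing $d_i\neq d_j$ amounts to checking that none of these differences is zero, which follows from the same Lemma~\ref{lemmaS} analysis (a difference $v_kn_k-v_ln_l$ with $0<v_k<\al_k$, $0<v_l<\al_l$ cannot vanish, since $v_kn_k=v_ln_l$ would put a nonzero element in $S\cap(-S)$). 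Once distinctness is in hand, each Betti degree is attained by a single generator, so $\beta_{d_i}=1$ for every $i$, and the $\beta$-condition of Lemma~\ref{kayip} is automatically satisfied for all five degrees.

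With both conditions verified, the conclusion is then a direct intersection. When $\al_1-\al_{21}>2$, every $d_i$ lies in $M_S$ and has $\beta_{d_i}=1$, so all of $f_1,\dots,f_5$ are indispensable. When $\al_1-\al_{21}=2$, Proposition~\ref{minimal} removes $d_4$ from $M_S$, so $f_4$ fails the $M_S$-membership test and is dropped, leaving exactly $\{f_1,f_2,f_3,f_5\}$. I would phrase the write-up as: apply Lemma~\ref{kayip}, cite Proposition~\ref{minimal} for the $M_S$ part, and dispatch the $\beta_d=1$ part in a sentence.

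Honestly, I do not expect a genuine obstacle here, since the heavy lifting was done in Proposition~\ref{minimal}; the corollary is designed to be a one-paragraph consequence. The only subtlety worth a moment's care is confirming that the case $\al_1-\al_{21}=2$ does not secretly collapse two of the Betti degrees into one (which would change $\beta_d$ rather than $M_S$); I would double-check that even in this degenerate regime the five $S$-degrees remain distinct, so that the \emph{only} effect of the hypothesis is the one recorded in Proposition~\ref{minimal}, namely the loss of $d_4$ from $M_S$. Once that is confirmed, the statement follows immediately.
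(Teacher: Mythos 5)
Your proposal is correct and follows exactly the paper's route: the published proof is the one-liner ``this follows from Lemma~\ref{kayip} and Proposition~\ref{minimal}, since $\beta_{d_i}=1$ for all $i$.'' Your extra care in verifying that the $d_i$ remain pairwise distinct (so that $\beta_{d_i}=1$ even when $\alpha_1-\alpha_{21}=2$) is a reasonable elaboration of a step the paper simply asserts, but it does not change the argument.
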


\begin{proof} This follows from Lemma \ref{kayip} and Proposition \ref{minimal}, since $\beta_{d_i}=1$, for all $i=1,\dots,5$.   
\end{proof}

A minimal graded free resolution of $K[S]$ is given in \cite[Theorem 6]{barucci} as follows:

\begin{theorem} \label{res}If $S$ is a $4$-generated pseudosymmetric semigroup, then the following is a minimal graded free $A$-resolution of $K[S]$:
$$({\bf F},\phi): 0\longrightarrow \bigoplus_{j=1}^{2} A[-c_j] \stackrel{\phi_3}{\longrightarrow}\bigoplus_{j=1}^{6} A[-b_j]\stackrel{\phi_2}{\longrightarrow}\bigoplus_{j=1}^{5} A[-d_j]\stackrel{\phi_1}{\longrightarrow}A\longrightarrow 0$$
where $\phi_1=(f_1,f_2,f_3,f_4,f_5),$

$$\phi_2=\left( \begin{array}{cccccc}X_2&0&X_3^{\alpha_3-1}&0&X_4&0\\
0&f_3&0&X_1X_3^{\alpha_3-1}&X_1^{\alpha_1-\alpha_{21}}&X_4^{\alpha_4-1}\\
X_1^{\alpha_{21}+1}&-f_2&X_4^{\alpha_4-1}&0&X_1X_2^{\alpha_2-1}&0\\
0&0&0&X_2&X_3&X_1^{\alpha_{21}}\\
-X_3&0&-X_1^{\alpha_1-\alpha_{21}-1}&X_4&0&X_2^{\alpha_2-1}\\
\end{array}\right),
$$

\bigskip

\noindent and $\phi_3=\left( \begin{array}{cccccc}X_4&-X_1&0&X_3&-X_2&0\\
-X_2^{\alpha_2-1}X_3^{\alpha_3-1}&X_4^{\alpha_4-1}&f_2&-X_1^{\alpha_1-1}&X_1^{\alpha_{21}}X_3^{\alpha_3-1}&-f_3
\end{array}\right)^T.$
\end{theorem}
The numbers $b_j$ and $c_j$ above can be obtained from the maps $\phi_2$ and  $\phi_3$ as in \cite[Corollary 16]{barucci}. For instance, the $S$-degrees of the non-zero entries in the first column of $\phi_2$ gives us 
$b_1=d_1+n_2=d_3+(\alpha_{21}+1)n_1=d_5+n_3$. Similarly we get:\\
$b_2=d_2+d_3$\\
$b_3=d_1+(\alpha_3-1)n_3=d_3+(\alpha_4-1)n_4=d_5+(\alpha_1-\alpha_{21}-1)n_1$\\
$b_4=d_4+n_2=d_2+n_1+(\alpha_3-1)n_3=d_5+n_4$\\
$b_5=d_1+n_4=d_2+(\alpha_1-\alpha_{21})n_1=d_3+n_1+(\alpha_2-1)n_2=d_4+n_3$\\
$b_6=d_2+(\alpha_4-1)n_4=d_4+\alpha_{21}n_1=d_5+(\alpha_2-1)n_2$\\
and\\
$c_1=b_1+n_4=b_2+n_1=b_4+n_3=b_5+n_2$\\
$c_2=b_1+(\alpha_2-1)n_2+(\alpha_3-1)n_3$\\
\phantom{$c_2 $ }$= b_2+(\alpha_4-1)n_4$\\
\phantom{$c_2 $ }$= b_3+d_2=b_3+\alpha_2n_2=b_3+\alpha_{21}n_1+n_4$\\
\phantom{$c_2 $ }$= b_4+(\alpha_1-1)n_1$\\
\phantom{$c_2 $ }$= b_5+\alpha_{21}n_1+(\alpha_3-1)n_3$\\
\phantom{$c_2 $ }$= b_6+d_3=b_6+\alpha_3n_3=b_6+(\alpha_1-\alpha_{21}-1)n_1+n_2$.\\

Note that the resolution $({\bf F},\phi)$ is called strongly indispensable if for any graded minimal resolution $({\bf G},\theta)$, we have an injective complex map
$i\colon({\bf F},\phi)\longrightarrow({\bf G},\theta)$. We finish this section with its main result to characterize when $K[S]$ has a strongly indispensable minimal graded free resolution.
\begin{theorem} \label{main}
Let $S$ be a 4-generated pseudo-symmetric semigroup. Then $K[S]$ has a strongly indispensable minimal graded free resolution if and only if $\alpha_4>2$ and $\alpha_1-\alpha_{21}>2$.
\end{theorem}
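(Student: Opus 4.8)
The plan is to reduce strong indispensability to a combinatorial condition on the Betti $S$-degrees at each homological level, and then to compute the full minimal $S$-graded free resolution of $K[S]$. I would use the characterization established in \cite{haraJA,barucci}: a minimal $S$-graded free resolution is strongly indispensable if and only if at every homological degree $i\ge 1$ each $S$-degree occurs with multiplicity at most one and the $S$-degrees occurring at level $i$ are pairwise incomparable with respect to $<_S$ (i.e. every syzygy of the resolution is indispensable, which is the higher analogue of Lemma \ref{kayip}). Granting this, one half of the necessity is immediate: a strongly indispensable resolution forces its minimal generators to be indispensable, so Corollary \ref{indispensable} yields $\alpha_1-\alpha_{21}>2$; equivalently, by Proposition \ref{minimal} the level-one degrees $d_1,\dots,d_5$ form an antichain precisely when $\alpha_1-\alpha_{21}>2$.

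Next I would record the first syzygies. Overlaps of the monomials of $f_1,\dots,f_5$ give five syzygies: $\sigma_1=X_2f_1+X_1^{\alpha_{21}+1}f_3-X_3f_5$ and $\sigma_5=X_3^{\alpha_3-1}f_1+X_4^{\alpha_4-1}f_3-X_1^{\alpha_1-\alpha_{21}-1}f_5$ on $\{f_1,f_3,f_5\}$; $\sigma_2=X_4^{\alpha_4-1}f_2+X_1^{\alpha_{21}}f_4+X_2^{\alpha_2-1}f_5$ and $\sigma_4=X_1X_3^{\alpha_3-1}f_2+X_2f_4+X_4f_5$ on $\{f_2,f_4,f_5\}$; and $\sigma_3=X_4f_1+X_1^{\alpha_1-\alpha_{21}}f_2+X_1X_2^{\alpha_2-1}f_3+X_3f_4$ on $\{f_1,f_2,f_3,f_4\}$, with $S$-degrees $e_1=\alpha_1n_1+n_2$, $e_5=\alpha_1n_1+(\alpha_3-1)n_3$, $e_2=\alpha_2n_2+(\alpha_4-1)n_4$, $e_4=n_2+\alpha_4n_4$ and $e_3=\alpha_1n_1+n_4$. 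Since the type of $K[S]$ is $2$, the Euler relation $1-5+\beta_2-2=0$ gives $\beta_2=6$; the decisive point is that the sixth minimal first syzygy is the Koszul syzygy of the pair $f_2,f_3$, of $S$-degree $e_6=d_2+d_3=\alpha_2n_2+\alpha_3n_3$. Finally, writing $N=n_1+n_2+n_3+n_4$, the duality $\mathrm{Ext}^3_A(K[S],A)\cong\omega_{K[S]}$ together with the fact that $\omega_{K[S]}$ is minimally generated by $|PF(S)|=2$ elements (sitting, after the shift by $N$, in the degrees of $PF(S)=\{g(S)/2,g(S)\}$) shows that the two top syzygies lie in degrees $g_1=N+g(S)/2$ and $g_2=N+g(S)$.

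With the resolution in hand, the criterion is checked by repeated use of Lemma \ref{lemmaS}. At level three $g_2-g_1=g(S)/2\notin S$, so these degrees are always incomparable. At level two the differences $e_6-e_1=n_4-n_1$, $e_6-e_3=n_2-n_1$, $e_6-e_4=n_3-n_1$, together with all differences among $e_1,\dots,e_5$, reduce after substituting the semigroup relations to expressions $v_kn_k-v_ln_l$ with $0<v_k<\alpha_k$, $0<v_l<\alpha_l$ (or to negative, hence non-semigroup, values), and so lie outside $S$ by Lemma \ref{lemmaS}, independently of $\alpha_4$. The one decisive comparison is
$$e_6-e_5=\alpha_2n_2-(\alpha_4-1)n_4=\alpha_{21}n_1-(\alpha_4-2)n_4.$$
If $\alpha_4=2$ this equals $\alpha_{21}n_1\in S$, so $e_5<_Se_6$ and level two is not an antichain; if $\alpha_4>2$ then, since $0<\alpha_{21}<\alpha_1$ and $0<\alpha_4-2<\alpha_4$, both $e_6-e_5$ and $e_5-e_6$ lie outside $S$ by Lemma \ref{lemmaS}. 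Hence level two is an antichain exactly when $\alpha_4>2$.

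Combining the three levels yields the theorem: strong indispensability forces $\alpha_1-\alpha_{21}>2$ (level one) and $\alpha_4>2$ (level two), while if both hold then every level is a multiplicity-free antichain and the resolution is strongly indispensable. I expect the main obstacle to be the claim in the second paragraph that $\sigma_1,\dots,\sigma_5$ together with the Koszul syzygy of $f_2,f_3$ form a complete list of minimal first syzygies—equivalently that $\beta_{2,e_6}=1$ with $e_6=d_2+d_3$ and that no other $S$-degree carries a minimal syzygy. I would settle this either by computing the $K$-polynomial $\prod_i(1-t^{n_i})\sum_{s\in S}t^s$ and reading off its level-two terms, or by verifying directly that these six syzygies generate the first syzygy module; once the list is pinned down, everything else is a routine application of Lemma \ref{lemmaS}.
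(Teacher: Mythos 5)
Your overall strategy coincides with the paper's: reduce strong indispensability to pairwise $<_S$-incomparability of the Betti degrees at each homological level, identify the six second Betti degrees (your $e_1,\dots,e_6$ are exactly the paper's $b_1,b_5,\dots,b_2$ in a different order, with $e_6=d_2+d_3$), and isolate the pair whose comparability is governed by $\alpha_4$. Your level-one and level-three arguments are fine, and your treatment of $e_6-e_5=\alpha_{21}n_1-(\alpha_4-2)n_4$ via Lemma \ref{lemmaS} is correct and in fact slightly cleaner than the paper's direct computation for that pair.

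However, there is a genuine gap at level two: you never compare $e_6=d_2+d_3$ with $e_2=d_2+(\alpha_4-1)n_4$. Their difference is
$$e_6-e_2=\alpha_3n_3-(\alpha_4-1)n_4=n_2+n_3-(\alpha_{21}+1)n_1,$$
and neither this expression nor its negative can be brought into the two-term form $v_kn_k-v_ln_l$ with $0<v_k<\alpha_k$, $0<v_l<\alpha_l$: the coefficient $\alpha_3$ violates the strict bound, and every substitution of a semigroup relation produces an expression supported on three generators. So your blanket appeal to Lemma \ref{lemmaS} for ``all remaining differences'' fails precisely here, and without settling $e_6-e_2\notin S$ and $e_2-e_6\notin S$ the sufficiency direction (that $\alpha_4>2$ and $\alpha_1-\alpha_{21}>2$ imply level two is an antichain) is incomplete. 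The paper devotes a dedicated ad hoc argument to exactly this pair ($b_2$ and $b_6$ in its labelling): assuming $n_2+n_3-(\alpha_{21}+1)n_1=\sum u_in_i$ forces $u_2=u_3=0$ and then $u_4>0$, leading to $n_3\in\langle n_1,n_2,n_4\rangle$, a contradiction; and the reverse difference contradicts the minimality of $\alpha_1$. You would need to supply this (or an equivalent) argument. Your other flagged concern --- that $\sigma_1,\dots,\sigma_5$ plus the Koszul syzygy of $f_2,f_3$ exhaust the minimal first syzygies --- is real but resolvable by your proposed methods (or by citing the explicit resolution in Barucci--Fr\"oberg--\c{S}ahin, as the paper does); the missing $e_6$ versus $e_2$ comparison is the substantive defect.
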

\begin{proof}
According to \cite[Proposion 29]{barucci}, $K[S]$ has a strongly indispensable minimal graded free resolution if and only if the differences $d_i-d_j$ and $b_i-b_j$ do not belong to $S$, for any $i$ and $j$. Indeed, $d_i-d_j \notin S$ if and only if $\alpha_1-\alpha_{21}>2$ from the proof of Proposition \ref{minimal}. For the other differences, we use the identities in $c_1$ and $c_2$.
As a result, from $c_1$, we get the differences \\
$b_1-b_2=n_1-n_4$, \\
$b_1-b_4=n_3-n_4$, \\
$b_1-b_5=n_2-n_4$, \\
$b_2-b_4=n_3-n_1$, \\
$b_2-b_5=n_2-n_1$, \\
$b_4-b_5=n_2-n_3.$ 

Similarly, from $c_2$, we get the differences \\
$b_1-b_3=n_2-(\alpha_3-1)n_3$, \\
$b_1-b_6=n_3-(\alpha_2-1)n_2$, \\
$b_3-b_4=(\alpha_1-\alpha_{21}-1)n_1-n_4$, \\
$b_3-b_5=(\alpha_3-1)n_3-n_4$, \\
$b_3-b_6=(\alpha_1-\alpha_{21}-1)n_1-(\alpha_2-1)n_2$\\
$b_4-b_6=n_2-\alpha_{21}n_1$, \\
$b_5-b_6=n_3-\alpha_{21}n_1$, 
. 

Observe that $b_i-b_j=v_kn_k-v_ln_l$ for any $i<j$ and for some $k \neq l \in\{1,2,3,4\}$ with $0<v_k<\alpha_k$ and $0<v_l<\alpha_l$. By Lemma \ref{lemmaS}, we have $\mp(b_i-b_j) \notin S$, for any $i<j$, except for $i=2$ and $j=3,6$. 

Furthermore, $b_2-b_3=\alpha_2n_2-(\alpha_4-1)n_4=\alpha_{21}n_1-(\alpha_4-2)n_4$. Again by Lemma \ref{lemmaS},  we have $\mp(b_2-b_3) \notin S$ when $\alpha_4>2$. On the other hand, if $\alpha_4=2$, then $b_2-b_3=\alpha_{21}n_1 \in S$. 

Finally, $b_2-b_6=\alpha_3n_3-(\alpha_4-1)n_4$. Using the identity $$d_5=(\alpha_{21}+1)n_1+(\alpha_3-1)n_3=n_2+(\alpha_4-1)n_4,$$ we obtain 
$b_2-b_6=n_2+n_3-(\alpha_{21}+1)n_1.$ If $b_2-b_6\in S$, then there are non-negative $u_i$ such that
$$n_2+n_3-(\alpha_{21}+1)n_1=b_2-b_6=u_1n_1+u_2n_2+u_3n_3+u_4n_4.$$ Then $(1-u_2)n_2+(1-u_3)n_3=(\alpha_{21}+1+u_1)n_1+u_4n_4>0$. It follows that $u_2=u_3=0$. Thus, $n_2+n_3=(\alpha_{21}+1+u_1)n_1+u_4n_4$. 

If $u_4=0$ then $\alpha n_1 \in \langle n_2,n_3\rangle$ with $\alpha<\alpha_1$ because if $\alpha\geq \alpha_1$, then\\ 
$n_2+n_3=(\alpha-\alpha_1)n_1+\alpha_1n_1=(\alpha-\alpha_1)n_1+n_3+(\alpha_4-1)n_4.$ 
This leads to a contradiction as $n_2=(\alpha-\alpha_1)n_1+n_4(\alpha_4-1) \in \langle n_1,n_4 \rangle$. So $u_4>0$ in which case,
$n_2+n_3=\alpha_{21}n_1+(1+u_1)n_1+n_4+(u_4-1)n_4=(1+u_1)n_1+\alpha_2n_2+(u_4-1)n_4$
$\Rightarrow n_3=(u_1+1)n_1+(\alpha_2-1)n_2+(u_4-1)n_4 \in \langle n_1,n_2,n_4 \rangle$, another contradiction. Hence, $b_2-b_6 \notin S$.

If $b_6-b_2=(\alpha_{21}+1)n_1-n_2-n_3=u_1n_1+u_2n_2+u_3n_3+u_4n_4$, for some non-negative $u_i$, then 
$(\alpha_{21}+1-u_1)n_1=(u_2+1)n_2+(u_3+1)n_3+u_4n_4>0$. Then $0<\alpha_{21}+1-u_1<\alpha_1$, a contradiction with the minimality of $\alpha_1$. Hence, $b_6-b_2$ can not be an element of $S$ either, completing the proof.
\end{proof}

\section{Cohen-Macaulayness of the tangent cone}
In this section, we give conditions for the Cohen-Macaulayness of the tangent cone. For some recent and past activity about the tangent cone of $C_S$, see \cite{arslan,AKN,CZ,HS,shen,shibuta}.

Recall that for an ideal $I$ with a fixed monomial ordering \textquoteleft$< $\textquoteright, a finite set $G \subset I$ is called a standard basis of $I$ if the leading monomials of the elements of $G$ generate the leading ideal of $I$ that is, if for any $f \in I-\{0\}$, there exits $g \in G $ such that ${\rm LM}(g)$ divides ${\rm LM}(f)$. Note that a standard basis is also a basis for the ideal and when the ordering  \textquoteleft$< $\textquoteright\  is global, standard basis is actually a Gr\"{o}bner basis, \cite{greuel-pfister}.

\begin{remark} Depending on the ordering among $n_1, n_2, n_3$ and $n_4$ there are $24$ possible cases.  We illustrate in Table $1$ that there are pseudo symmetric monomial curves with Cohen-Macaulay tangent cones in all of these cases. We will determine standard bases and characterize Cohen-Macaulayness completely in the first $12$ cases in terms of the defining integers. For the remaining $12$ cases, finding a general form for the standard basis is not possible, and instead of giving a characterization as in \cite{AKN}, we give some partial results involving the defining integers $\alpha_i$ and $\alpha_{21}$.
\end{remark}
\begin{table}[H]
\caption{cases
\label{fig:cases}}
\begin{center}
\begin{tabular}{|c"c|c|c|c|c"c|c|c|c|}
  \hline
 & $\alpha_{21}$&$\alpha_1$& $\alpha_2$ & $\alpha_3$& $\alpha_4$&$n_1$&$n_2$&$n_3$&$n_4$\\
  \thickhline
  $n_1<n_2<n_3<n_4$&2 &5 &3 &2 &2 &7 &12 & 13 & 22 \\
  \hline
   $n_1<n_2<n_4<n_3$&2&5&3&2&4&19&20&29&22\\
  \hline
  $n_1<n_3<n_2<n_4$&3&5&4&2&3&17&21&19&33\\
  \hline
  $n_1<n_3<n_4<n_2$&3&6&3&3&5&37&52&42&45\\
  \hline
  $n_1<n_4<n_2<n_3$&3&6&3&2&4&19&28&33&27 \\
  \hline
  $n_1<n_4<n_3<n_2$&3&8&3&4&6&61&88&83&81\\
  \thickhline
   
   $n_2<n_1<n_3<n_4$&2&6&6&3&5&73&39&86&88\\
  \hline
   $n_2<n_1<n_4<n_3$&2&5&4&2&4&25&20&35&30\\
  \hline
   $n_2<n_3<n_1<n_4$&2&4&4&2&4&25&19&22&26\\
  \hline
  $n_2<n_3<n_4<n_1$&3&5&6&2&6&61&39&50&51\\
  \hline
  $n_2<n_4<n_1<n_3$&2&5&4&2&5&33&24&45&30\\
  \hline
   $n_2<n_4<n_3<n_1$&2&4&4&2&5&33&23&28&26\\
  \thickhline
   
   $n_3<n_1<n_2<n_4$&1&3&2&3&3&13&14&9&15\\
  \hline
   $n_3<n_1<n_4<n_2$&3&6&3&4&6&61&82&51&63\\
  \hline
   $n_3<n_2<n_1<n_4$&2&4&4&5&4&61&49&22&74\\
  \hline
   $n_3<n_2<n_4<n_1$&2&4&5&4&5&81&59&28&74\\
  \hline
  $n_3<n_4<n_1<n_2$&2&4&2&4&6&41&55&24&28\\
  \hline
  $n_3<n_4<n_2<n_1$&2&4&3&4&5&49&47&24&43\\
  \thickhline
  
 $n_4<n_1<n_2<n_3$&2&5&2&2&5&17&24&29&14\\
  \hline
  $n_4<n_1<n_3<n_2$&2&4&2&2&4&13&19&16&12\\
  \hline
  $n_4<n_2<n_1<n_3$&1&4&2&2&4&13&12&19&11\\
  \hline
  $n_4<n_2<n_3<n_1$&1&3&2&2&4&13&11&12&9\\
  \hline
  $n_4<n_3<n_1<n_2$&2&5&2&4&6&41&58&35&34\\
  \hline
  $n_4<n_3<n_2<n_1$&1&4&2&4&6&41&34&29&27\\
  \thickhline
\end{tabular}
\end{center}
\end{table}

\subsection{Cohen-Macaulayness of the tangent cone when $n_1$ is smallest}

 In this section, we assume that $n_1$ is the smallest number in $\{n_1,n_2,n_3,n_4\}$. Using the indispensable binomials of $I_S$, we characterize the Cohen-Macaulayness of the tangent cone of $C_S$. First, we get the necessary conditions.

\begin{lemma} \label{necessary1}
If the tangent cone of the monomial curve $C_S$ is Cohen-Macaulay, then the following must hold
\begin{enumerate}
\item[(C1.1)]  $\alpha_2\leq \alpha_{21}+1$, 
\item[(C1.2)]  $\alpha_{21}+\alpha_3\leq \alpha_1$,
\item[(C1.3)]  $\alpha_4\leq \alpha_2+\alpha_3-1$.
\end{enumerate} 
\end{lemma}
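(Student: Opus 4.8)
The plan is to use the standard criterion that, since $n_1$ is the smallest among $n_1,\dots,n_4$, the initial form $t^{n_1}$ is a superficial element of order one, so the tangent cone $K[S]/{I_S}_*\cong\operatorname{gr}_m(R)$ is Cohen--Macaulay if and only if $t^{n_1}$ is a nonzerodivisor on it (see, e.g., \cite{garcia}). Writing $\operatorname{ord}_S(s)=\max\{u_1+\cdots+u_4 : s=u_1n_1+\cdots+u_4n_4,\ u_i\in\mathbb{N}\}$ for the maximal factorization length of $s\in S$, this nonzerodivisor property is equivalent to the numerical condition $\operatorname{ord}_S(s+n_1)=\operatorname{ord}_S(s)+1$ for every $s\in S$; equivalently, no monomial $m\notin{I_S}_*$ satisfies $X_1 m\in{I_S}_*$. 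Thus, to prove each of (1)--(3) is necessary, I would assume it fails and exhibit an $s\in S$ with $\operatorname{ord}_S(s+n_1)\ge\operatorname{ord}_S(s)+2$, which forces $t^{n_1}$ to be a zerodivisor and the tangent cone not to be Cohen--Macaulay.

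The witnesses come directly from the relations $f_2,f_3,f_4$, each of whose two monomials trades degree exactly across the inequality in question. If (1) fails, i.e. $\alpha_2\ge\alpha_{21}+2$, then in $f_2$ the summand $X_1^{\alpha_{21}}X_4$ has strictly smaller degree, so $X_1^{\alpha_{21}}X_4=-f_{2*}\in{I_S}_*$; I take $s=(\alpha_{21}-1)n_1+n_4$ (legitimate since $\alpha_{21}\ge 1$), so that $s+n_1=\alpha_{21}n_1+n_4=\alpha_2 n_2$ and hence $\operatorname{ord}_S(s+n_1)\ge\alpha_2\ge\alpha_{21}+2$. If (2) fails, i.e. $\alpha_3\ge\alpha_1-\alpha_{21}+1$, then using $\alpha_1-\alpha_{21}\ge 2$ (which holds since otherwise $f_3$ would read $X_3^{\alpha_3}-X_2$, forcing $n_2=\alpha_3 n_3\in\langle n_3\rangle$, against minimal generation) I take $s=(\alpha_1-\alpha_{21}-2)n_1+n_2$, so that $s+n_1=\alpha_3 n_3$ and $\operatorname{ord}_S(s+n_1)\ge\alpha_3\ge\alpha_1-\alpha_{21}+1$. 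If (3) fails, i.e. $\alpha_4\ge\alpha_2+\alpha_3$, I take $s=(\alpha_2-1)n_2+(\alpha_3-1)n_3$, so that $s+n_1=\alpha_4 n_4$ and $\operatorname{ord}_S(s+n_1)\ge\alpha_4\ge\alpha_2+\alpha_3$. In each case the displayed factorization of $s$ has length $\alpha_{21}$, $\alpha_1-\alpha_{21}-1$, and $\alpha_2+\alpha_3-2$ respectively, which is exactly two less than the lower bound just obtained for $\operatorname{ord}_S(s+n_1)$.

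The crux, and the step I expect to be the main obstacle, is proving that each displayed factorization of $s$ is in fact of maximal length, i.e. $\operatorname{ord}_S\big((\alpha_{21}-1)n_1+n_4\big)=\alpha_{21}$, $\operatorname{ord}_S\big((\alpha_1-\alpha_{21}-2)n_1+n_2\big)=\alpha_1-\alpha_{21}-1$, and $\operatorname{ord}_S\big((\alpha_2-1)n_2+(\alpha_3-1)n_3\big)=\alpha_2+\alpha_3-2$; equivalently, that $X_1^{\alpha_{21}-1}X_4$, $X_1^{\alpha_1-\alpha_{21}-2}X_2$ and $X_2^{\alpha_2-1}X_3^{\alpha_3-1}$ are standard (not in ${I_S}_*$). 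I would prove each by taking an arbitrary competing factorization $s=\sum u_i n_i$ and peeling off generators, repeatedly invoking two facts: that $\alpha_i$ is the least positive integer with $\alpha_i n_i\in\langle n_j:j\ne i\rangle$, and Lemma \ref{lemmaS}, which forbids $v_kn_k-v_ln_l\in S$ when $0<v_k<\alpha_k$ and $0<v_l<\alpha_l$. For the first one, say, a longer competing factorization forces $u_4=0$ (otherwise deleting one $n_4$ yields a factorization of $(\alpha_{21}-1)n_1$ of length $\ge\alpha_{21}$, impossible since for $0\le k<\alpha_1$ the only factorization of $kn_1$ is $k\cdot n_1$); substituting the relation $f_2$ and applying Lemma \ref{lemmaS} together with the minimality of $\alpha_2$ and $\alpha_3$ then yields a contradiction. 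The other two are handled analogously. These case analyses are elementary but require care in the occasional sub-case where a coefficient exceeds some $\alpha_i$ and a further relation must be substituted before Lemma \ref{lemmaS} applies, and they constitute the technical heart of the argument.
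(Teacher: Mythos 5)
Your strategy is sound and the witnesses are well chosen: reducing Cohen--Macaulayness to the initial form of $t^{n_1}$ being a nonzerodivisor on $\operatorname{gr}_m(R)$, hence to ${\rm ord}_S(s+n_1)={\rm ord}_S(s)+1$ for all $s\in S$, is the standard criterion, and each of $s=(\alpha_{21}-1)n_1+n_4$, $(\alpha_1-\alpha_{21}-2)n_1+n_2$, $(\alpha_2-1)n_2+(\alpha_3-1)n_3$ does jump by at least two when the corresponding inequality fails. This is a genuinely different route from the paper's, which never touches the order function: it invokes Corollary \ref{indispensable} to place $f_2$, $f_3$ (and, when $\alpha_1-\alpha_{21}>2$, $f_4$) in every minimal standard basis, and then \cite[Lemma 2.7]{AMS} forbids $X_1$ from dividing their leading monomials. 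That shortcut is what makes the paper's proof of (1) and (2) essentially one line each; your route must pay for the same information by certifying that $X_1^{\alpha_{21}-1}X_4$, $X_1^{\alpha_1-\alpha_{21}-2}X_2$ and $X_2^{\alpha_2-1}X_3^{\alpha_3-1}$ do not lie in ${I_S}_*$, i.e.\ that the displayed factorizations have maximal length.

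That certification is where the proposal has a real gap. You rightly flag it as the crux, but ``substitute $f_2$, apply Lemma \ref{lemmaS}, the other two are analogous'' is not yet a proof, and the work deferred is the majority of the argument. Already for the first witness, after forcing $u_4=0$ you are left with $vn_1+n_4=u_2n_2+u_3n_3$ with $0<v\le\alpha_{21}-1$, and you must iterate: rule out $u_2\ge\alpha_2$ by substituting $f_2$, repeatedly reduce $u_3\ge\alpha_3$ by substituting $f_3$ (checking at each stage that $n_4\notin\langle n_1,n_2,n_3\rangle$), and only then does Lemma \ref{lemmaS} close the remaining cases --- most cleanly by verifying $d-d_i\notin S$ for every minimal Betti degree $d_i$, which is precisely the device the paper deploys (via Proposition \ref{minimal}) in the one place where indispensability is unavailable to it, namely condition (3) when $\alpha_1=\alpha_{21}+2$. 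The third witness is the hardest: the branch $d\ge_S d_1$ is not excluded by Lemma \ref{lemmaS} alone and needs at least two further rounds of substitution of $f_1$ and $f_4$ before every subcase terminates. So the skeleton is correct and would yield a complete (if longer) proof, but the three order computations must actually be written out, case by case, before this stands as a proof of the lemma.
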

\begin{proof}
Corollary \ref{indispensable} implies that $f_2$ and $f_3$ are indispensable binomials of $I_S$, which means that they appear in every standard basis. To prove C$(1.1)$, assume contrary that $\alpha_2 > \alpha_{21}+1$. Then, ${\rm LM}(f_2)=X_1^{\alpha_{21}}X_4$ is divisible by $X_1$. This leads to a contradiction as \cite[Lemma 2.7]{AMS} implies that the tangent cone is not Cohen-Macaulay. Similarly, when $\alpha_{21}+\alpha_3> \alpha_1$, ${\rm LM}(f_3)=X_1^{\alpha_1-\alpha_{21}-1}X_2$ is divisible by $X_1$. So, if the tangent cone is Cohen-Macaulay, then C$(1.1)$ and C$(1.2)$ must hold. 

To show the last inequality holds, assume not: $\alpha_4 > \alpha_2+\alpha_3-1$. Then ${\rm LM}(f_4)=X_1X_2^{\alpha_{2}-1}X_3^{\alpha_3-1}$ is divisible by $X_1$. If $\alpha_1 > \alpha_{21}+2$, $f_4$ is indispensable by Corollary \ref{indispensable} again. As before, the tangent cone is not Cohen Macaulay, a contradiction. So, we must have $\alpha_1=\alpha_{21}+2$. In this case, there exists a binomial $g$ in a minimal standard basis of $I_S$ such that  ${\rm LM}(g) \mid {\rm LM}(f_4)$ and $X_1 \nmid {\rm LM}(g)$. Hence ${\rm LM}(g)=X_2^{a}X_3^{b}$ with $0< a\leq \alpha_2-1$ and $0< b\leq \alpha_3-1$ since the case $a=0$ contradicts with the minimality of $d_2$ and the case $b=0$ contradicts with the minimality of $d_3$. By Proposition \ref{minimal} and its proof, $M_S=\{d_1,d_2,d_3,d_5\}$ are the minimal degrees and the only degree that is smaller than $d_4$ is $d_3$. Since ${\rm deg}(g)<d_4$, we must have $d_3<{\rm deg}(g)<d_4$. Hence, ${\rm deg}(g)-d_3=an_2-(\alpha_3-b)n_3 \in S$ with $0<a<\alpha_2$ and $0<\alpha_3-b<\alpha_3$ but this contradicts to Lemma \ref{lemmaS}. So, C$(1.3)$ must hold as well. 
\end{proof}

Before we check if the conditions C$(1.1)$, C$(1.2)$ and C$(1.3)$ are sufficient, we note the following.
\begin{remark}\label{remark2} $\alpha_{1}\geq \alpha_{4}$ holds. Indeed, as $f_1$ is $S$-homogeneous and $n_1<n_4$, we have $(\alpha_4-1)n_4<n_3+(\alpha_4-1)n_4=\alpha_1n_1<\alpha_1n_4$ implying  $\alpha_{1} > \alpha_{4}-1$.
\end{remark}

Next, we compute a standard basis for $I_S$, when C$(1.1)$, C$(1.2)$ and C$(1.3)$ hold.
\begin{lemma}\label{smallest1} If C$(1.1)$, C$(1.2)$ and C$(1.3)$ hold, the set $G=\{ f_1, f_2, f_3, f_4, f_5 \}$ is a minimal standard basis for $I_S$ with respect to a negative degree reverse lexicographical ordering making $X_1$ the smallest variable.
\end{lemma}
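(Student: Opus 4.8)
The plan is to show that $G=\{f_1,f_2,f_3,f_4,f_5\}$ is a standard basis by verifying that all $S$-polynomials reduce to zero, using Buchberger's criterion adapted to the local setting (standard bases with respect to the negative degree reverse lexicographic ordering). First I would record the leading monomials under this ordering. Since $n_1$ is the smallest and $X_1$ is the smallest variable, the ordering picks the term of lowest total degree (with ties broken so that high powers of $X_1$ are preferred, i.e.\ $X_1$ is smallest in the reverse-lex tie-break). Under conditions $(1)$, $(2)$, $(3)$ and Remark \ref{remark2} I expect the leading monomials to be exactly the pure powers: ${\rm LM}(f_1)=X_1^{\alpha_1}$, ${\rm LM}(f_2)=X_2^{\alpha_2}$, ${\rm LM}(f_3)=X_3^{\alpha_3}$, ${\rm LM}(f_4)=X_4^{\alpha_4}$, together with ${\rm LM}(f_5)=X_1^{\alpha_{21}+1}X_3^{\alpha_3-1}$. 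The conditions $(1)$--$(3)$ are precisely what guarantee that for $f_2,f_3,f_4$ the pure-power side has strictly smaller total $t$-degree than the mixed side, so that these are genuinely the leading (smallest) terms; I would make these three comparisons explicit, as they justify why $X_1\nmid {\rm LM}(f_i)$.

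Next I would organize the $S$-polynomial computations. The key simplification is that $f_1,f_2,f_3,f_4$ have pairwise coprime leading monomials (four distinct pure powers in four distinct variables), so by the coprimality criterion (Buchberger's first criterion, valid for standard bases) all $S$-polynomials among these four reduce to zero automatically. That leaves only the $S$-polynomials involving $f_5$, whose leading monomial $X_1^{\alpha_{21}+1}X_3^{\alpha_3-1}$ shares the variable $X_1$ with ${\rm LM}(f_1)$ and the variable $X_3$ with ${\rm LM}(f_3)$. Thus the real work is confined to $\mathrm{spoly}(f_1,f_5)$, $\mathrm{spoly}(f_3,f_5)$, and possibly $\mathrm{spoly}(f_4,f_5)$ and $\mathrm{spoly}(f_2,f_5)$ if their leading monomials turn out not to be coprime with ${\rm LM}(f_5)$. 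For each of these I would form the $S$-polynomial, reduce it using the generators in $G$, and track that the remainder is $0$; the $S$-graded structure helps here because every intermediate binomial has a well-defined $S$-degree, and Lemma \ref{lemmaS} controls which cancellations can occur.

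The main obstacle I anticipate is the reduction of $\mathrm{spoly}(f_3,f_5)$ and $\mathrm{spoly}(f_1,f_5)$: these require chaining several of the $f_i$ together, and one must verify at each reduction step that the monomial being cancelled is actually divisible by a leading monomial of $G$ (rather than leaving a non-reducible remainder), which is exactly where conditions $(1)$--$(3)$ and the inequality $\alpha_1\geq\alpha_4$ of Remark \ref{remark2} get used a second time. I would handle this by writing the relations $d_i$ among the $S$-degrees (already listed in the proof of Theorem \ref{main}) and using them to predict the intermediate monomials, then checking termination. Finally, minimality of the standard basis follows because each $f_i$ is indispensable (Corollary \ref{indispensable}) when $\alpha_1-\alpha_{21}>2$, and in the boundary case the leading-monomial structure shows no generator's leading term divides another's, so none can be dropped; I would state this minimality check as the closing step.
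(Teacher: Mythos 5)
Your identification of the leading monomials of $f_1$ and $f_5$ is wrong, and this breaks the whole argument. Under a negative degree ordering the leading term is the one of \emph{smallest} total degree. For $f_1=X_1^{\alpha_1}-X_3X_4^{\alpha_4-1}$ the two terms have total degrees $\alpha_1$ and $\alpha_4$, and Remark \ref{remark2} gives $\alpha_1\geq\alpha_4$; together with the tie-break (reverse lex with $X_1$ smallest prefers to \emph{drop} the monomial with the high power of $X_1$ to the tail), this forces ${\rm LM}(f_1)=X_3X_4^{\alpha_4-1}$, not $X_1^{\alpha_1}$. Likewise for $f_5$ the inequality $\alpha_4\leq\alpha_{21}+\alpha_3$ (which follows from $(1)$ and $(3)$) gives ${\rm LM}(f_5)=X_2X_4^{\alpha_4-1}$, not $X_1^{\alpha_{21}+1}X_3^{\alpha_3-1}$. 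Your proposal is internally inconsistent on exactly this point: you claim these computations ``justify why $X_1\nmid{\rm LM}(f_i)$,'' yet both of the leading monomials you wrote down for $f_1$ and $f_5$ are divisible by $X_1$. Since the entire purpose of this lemma is to feed \cite[Lemma 2.7]{AMS}, which requires $X_1\nmid{\rm LM}(f_i)$ for every element of the standard basis, getting these two leading monomials right is not a detail one can defer.

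The error then propagates into your organization of the $S$-polynomial computations. With the correct leading monomials, $f_1$ does \emph{not} have a pure-power leading term coprime to the others: ${\rm LM}(f_1)=X_3X_4^{\alpha_4-1}$ shares $X_3$ with ${\rm LM}(f_3)$ and $X_4$ with ${\rm LM}(f_4)$, so the pairs $(1,3)$ and $(1,4)$ require genuine Mora reductions (the paper's reduction of ${\rm spoly}(f_1,f_4)$ chains through $f_3$ and then $f_2$, using the derived inequality $\alpha_2+\alpha_3\leq\alpha_1+1$). The coprime pairs are in fact $(1,2),(2,3),(2,4),(3,4),(3,5)$, and the nontrivial work is at $(1,3),(1,4),(1,5),(2,5),(4,5)$ --- almost the complement of the list you propose to check. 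So while your overall strategy (compute leading monomials, dispose of coprime pairs, reduce the rest with ecart bookkeeping, then argue minimality) is the same as the paper's, the execution as written would verify the wrong $S$-pairs and would conclude with leading monomials that defeat the lemma's purpose.
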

\begin{proof}
We will apply standard basis algorithm to the set $G=\{f_1, f_2, f_3, f_4, f_5\}$ with the normal form algorithm NFM\tiny{ORA}\normalsize, see \cite{greuel-pfister} for details. We need to show $NF ({\rm spoly}(f_i,f_j) \vert G)=0$ for any $i\neq j$ with  $1\leq i,j\leq 5$. Observe that the conditions (C1.1) and (C1.3) imply that $\alpha_4 \leq \alpha_{21}+\alpha_3$(*) and hence,
\begin{itemize}
\item ${\rm LM}( f_1)={\rm LM}(X_1^{\alpha_1}-X_3X_4^{\alpha_4-1})= X_3X_4^{\alpha_4-1}$,\ \ \ by Remark \ref{remark2}
\item ${\rm LM}(f_2)={\rm LM}(X_2^{\alpha_2}-X_1^{\alpha_{21}}X_4)=X_2^{\alpha_2}$,\ \ \ by (C1.1).
\item ${\rm LM}(f_3)={\rm LM}(X_3^{\alpha_3}-X_1^{\alpha_1-\alpha_{21}-1}X_2)= X_3^{\alpha_3}$,\ \ \ by (C1.2)
\item ${\rm LM}(f_4)={\rm LM}(X_4^{\alpha_4}-X_1X_2^{\alpha_2-1}X_3^{\alpha_3-1})=X_4^{\alpha_4}$,\ \ \ by (C1.3)
\item ${\rm LM}(f_5)={\rm LM}(X_1^{\alpha_{21}+1}X_3^{\alpha_3-1}-X_2X_4^{\alpha_4-1})= X_2X_4^{\alpha_4-1}$,\ \ \ by (*).
\end{itemize}
Then we conclude the following:
\begin{itemize}
\item $ NF({\rm spoly}(f_i,f_j) \vert G)= 0$ as ${\rm LM}( f_i)$ and ${\rm LM}( f_j)$ are relatively prime, for $(i,j)\in \{(1,2),(2,3),(2,4),(3,4),(3,5)\}$.

\item ${\rm spoly}(f_1,f_3)=X_1^{\alpha_1}X_3^{\alpha_3-1}-X_1^{\alpha_1-\alpha_{21}-1}X_2X_4^{\alpha_4-1}$ and by (*) its leading monomial is $X_1^{\alpha_1-\alpha_{21}-1}X_2X_4^{\alpha_4-1}$, which is divisible only by ${\rm LM}(f_5)$. As ${\rm ecart} (f_5)= {\rm ecart} ({\rm spoly}(f_1,f_3))$ and ${\rm spoly}(f_5, {\rm spoly}(f_1,f_3))=0$, we have
    $$ NF({\rm spoly}(f_1,f_3) \vert G)= 0.$$

\item ${\rm spoly}(f_1,f_4)=X_1^{\alpha_{1}}X_4-X_1X_2^{\alpha_2-1}X_3^{\alpha_{3}}$.
\begin{eqnarray*}
  \alpha_2 &\leq& \alpha_{21}+1\ \  \hbox{from (C1.1). Then,}\\
  \alpha_2+\alpha_3 &\leq & \alpha_3+\alpha_{21}+1\ \ \hbox{then as\ }  \alpha_3 \leq \alpha_1-\alpha_{21} \hbox{ from (C1.2)} \\
  \alpha_2+\alpha_3& \leq& \alpha_1+1.
\end{eqnarray*}
As a result, ${\rm LM}({\rm spoly}(f_1,f_4))=X_1X_2^{\alpha_2-1}X_3^{\alpha_{3}}$. Only ${\rm LM}(f_3)$ divides ${\rm LM}( {\rm spoly}(f_1,f_4))$ and ${\rm ecart} ({\rm spoly}(f_1,f_4)) \geq  {\rm ecart}(f_3)$. Then,\newline
$ {\rm spoly}(f_3,{\rm spoly}(f_1,f_4))=X_1^{\alpha_1}X_4-X_1^{\alpha_1-\alpha_{21}}X_2^{\alpha_2}$. As $\alpha_2 \leq \alpha_{21}+1$ from (C1.1), $\alpha_1-\alpha_{21}+\alpha_2\leq \alpha_1+1$ and hence ${\rm LM}({\rm spoly}(f_3,{\rm spoly}(f_1,f_4)))=X_1^{\alpha_1-\alpha_{21}}X_2^{\alpha_2}$. Among the leading monomials of elements of $G$, only ${\rm LM}(f_2)$ divides this with
${\rm ecart}(f_2)=\alpha_{21}+1-\alpha_2={\rm ecart}({\rm spoly}(f_3,{\rm spoly}(f_1,f_4))$. Then
${\rm spoly}(f_2,{\rm spoly}(f_3,{\rm spoly}(f_1,f_4)))=0$ implying $$ NF({\rm spoly}(f_1,f_4) \vert G)= 0.$$

\item  ${\rm spoly}(f_1,f_5)=X_1^{\alpha_{21}+1}X_3^{\alpha_3}-X_1^{\alpha_{1}}X_2$ with ${\rm LM}({\rm spoly}(f_1,f_5))=X_1^{\alpha_{21}+1}X_3^{\alpha_{3}}$ by (C1.2).  Only ${\rm LM}(f_3)$ divides this. As ${\rm ecart} ({\rm spoly}(f_1,f_5))= \alpha_1-\alpha_{21}+\alpha_{3} ={\rm ecart}(f_3)$ and
    ${\rm spoly}(f_3,{\rm spoly}(f_1,f_5))=0$, $ NF({\rm spoly}(f_1,f_5) \vert G)= 0.$
    
\item  ${\rm spoly}(f_2,f_5)=X_1^{\alpha_{21}+1}X_2^{\alpha_2-1}X_3^{\alpha_3-1}-X_1^{\alpha_{21}}X_4^{\alpha_{4}}$.
As (C1.3) implies $\alpha_{21}+\alpha_4\leq\alpha_{21}+\alpha_2+\alpha_3-1$,  ${\rm LM}({\rm spoly}(f_2,f_5))=X_1^{\alpha_{21}}X_4^{\alpha_{4}}.$  Only ${\rm LM}(f_4)$ divides this. As
 ${\rm ecart} ({\rm spoly}(f_2,f_5))= \alpha_2+\alpha_3-1-\alpha_{4} ={\rm ecart}(f_4)$ and
    ${\rm spoly}(f_4,{\rm spoly}(f_2,f_5))=0$,
    $ NF({\rm spoly}(f_2,f_5) \vert G)= 0.$
 Finally,
 \item ${\rm spoly}(f_4,f_5)=X_1^{\alpha_{21}+1}X_3^{\alpha_3-1}X_4-X_1X_2^{\alpha_2}X_3^{\alpha_3-1}$. Then $\alpha_2 \leq \alpha_{21}+1$ implies $\alpha_2+\alpha_3 \leq \alpha_{21}+1+\alpha_3$ and hence ${\rm LM}({\rm spoly}(f_4,f_5))=X_1X_2^{\alpha_{2}}X_3^{\alpha_3-1}.$  Only ${\rm LM}(f_2)$ divides this. Since ${\rm ecart}({\rm spoly}(f_4,f_5))= \alpha_{21}+1-\alpha_{2} ={\rm ecart}(f_2)$ and  ${\rm spoly}(f_2,{\rm spoly}(f_4,f_5))=0$, $ NF({\rm spoly}(f_4,f_5) \vert G)= 0.$
\end{itemize}
It is not hard to see that this standard basis is minimal, so we are done.
\end{proof}
We are now ready to give the complete characterization of the Cohen-Macaulayness of the tangent cone.
\begin{theorem}
Suppose $n_1$ is the smallest number in $\{n_1,n_2,n_3,n_4\}$. The tangent cone of $C_S$ is Cohen-Macaulay if and only if 
\begin{enumerate}
\item[(C1.1)]  $\alpha_2\leq \alpha_{21}+1$, 
\item[(C1.2)]  $\alpha_{21}+\alpha_3\leq \alpha_1$,
\item[(C1.3)]  $\alpha_4\leq \alpha_2+\alpha_3-1$.
\end{enumerate} 
\end{theorem}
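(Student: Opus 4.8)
The plan is to prove the theorem by combining the two halves already established in the excerpt. The necessity direction is precisely Lemma \ref{necessary1}, which shows that if the tangent cone of $C_S$ is Cohen-Macaulay, then conditions $(1)$, $(2)$ and $(3)$ must hold. So the only remaining work is the sufficiency direction: I must show that if $(1)$, $(2)$ and $(3)$ hold, then the tangent cone is Cohen-Macaulay. This is where Lemma \ref{smallest1} becomes the essential tool, since it provides an explicit minimal standard basis $G=\{f_1,f_2,f_3,f_4,f_5\}$ with respect to a negative degree reverse lexicographic order in which $X_1$ is the smallest variable.

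First I would recall the standard criterion (as in \cite{AMS} or \cite{greuel-pfister}) that, with $X_1$ chosen as the smallest variable, the tangent cone at the origin is Cohen-Macaulay if and only if $X_1$ is a nonzerodivisor on the associated graded ring, and that this in turn is equivalent to the statement that no leading monomial ${\rm LM}^*(g)$ of an element $g$ in the minimal standard basis is divisible by $X_1$. Here ${\rm LM}^*$ refers to the leading monomial of the homogeneous summand of least degree, which under the negative degree reverse lexicographic ordering coincides with the leading term computed in Lemma \ref{smallest1}. The key point is that Lemma \ref{smallest1} has already identified these leading monomials explicitly.

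Next I would simply read off the leading monomials computed in the proof of Lemma \ref{smallest1}, namely ${\rm LM}(f_1)=X_3X_4^{\alpha_4-1}$, ${\rm LM}(f_2)=X_2^{\alpha_2}$, ${\rm LM}(f_3)=X_3^{\alpha_3}$, ${\rm LM}(f_4)=X_4^{\alpha_4}$, and ${\rm LM}(f_5)=X_2X_4^{\alpha_4-1}$, and observe that under conditions $(1)$, $(2)$, $(3)$ (together with their consequence $(*)$ and Remark \ref{remark2}) none of these five monomials is divisible by $X_1$. By the Cohen-Macaulayness criterion above, this immediately yields that $X_1$ is a nonzerodivisor modulo ${I_S}_*$ and hence that the tangent cone is Cohen-Macaulay. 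Combining this with the necessity direction from Lemma \ref{necessary1} completes the equivalence.

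The main obstacle is not a computational one, since the hard standard-basis computation is already carried out in Lemma \ref{smallest1}; rather, it is conceptual and lies in correctly invoking the criterion that links Cohen-Macaulayness of the tangent cone to the divisibility of standard-basis leading monomials by the smallest variable $X_1$. Specifically, I would need to be careful that the ordering chosen in Lemma \ref{smallest1} is exactly the one for which this criterion applies, and that passing from the standard basis of $I_S$ to a description of ${I_S}_*$ via leading forms is legitimate. Once this bridge is in place the argument is essentially a one-line verification, so the proof is short and amounts to assembling Lemma \ref{necessary1} and Lemma \ref{smallest1} through the nonzerodivisor characterization.
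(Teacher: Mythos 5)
Your proposal is correct and follows exactly the paper's own argument: necessity from Lemma \ref{necessary1}, and sufficiency by combining the minimal standard basis of Lemma \ref{smallest1} with the criterion of \cite[Lemma 2.7]{AMS}, after observing that none of the leading monomials ${\rm LM}(f_i)$ is divisible by $X_1$.
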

\begin{proof} If the tangent cone of $C_S$ is Cohen-Macaulay, then C$(1.1)$, C$(1.2)$ and C$(1.3)$ hold, by Lemma \ref{necessary1}. If C$(1.1)$, C$(1.2)$ and C$(1.3)$ hold, then from Lemma \ref{smallest1}, a minimal standard basis for $I_S$ is $G=\{ f_1, f_2, f_3, f_4, f_5 \}$ and $X_1 \nmid {\rm LM}(f_i)$ for $i=1, 2, 3, 4, 5$. Thus, it follows from \cite[Lemma 2.7]{AMS} that the tangent cone is Cohen-Macaulay.
\end{proof}

\subsection{Cohen Macaulayness of the tangent cone when $n_2$ is smallest} In this section, we deal with the Cohen Macaulayness of the tangent cone when $n_2$ is the smallest number in $\{n_1,n_2,n_3,n_4\}$. As before, we get the necessary conditions first.
\begin{lemma} \label{necessary2} Suppose $n_2$ is the smallest number in $\{n_1,n_2,n_3,n_4\}$. If the tangent cone of the monomial curve $C_S$ is Cohen-Macaulay, then the following must hold
\begin{enumerate}
 \item[(C2.1)]  $\alpha_{21}+\alpha_3\leq \alpha_1$, 
 \item[(C2.2)] $\alpha_{21}+\alpha_3\leq \alpha_4$, 
 \item[(C2.3)] $\alpha_4 \leq \alpha_2+\alpha_3-1$, 
 \item[(C2.4)] $\alpha_{21}+\alpha_1\leq \alpha_4+\alpha_2-1$.
 \end{enumerate}
\end{lemma}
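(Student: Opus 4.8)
The plan is to follow the template established in Lemma \ref{necessary1}, deriving each of the four necessary conditions by locating a minimal standard basis element whose leading monomial is divisible by the smallest variable---here $X_2$---and invoking \cite[Lemma 2.7]{AMS} to force non-Cohen-Macaulayness. The crucial preliminary step is to determine the leading monomials of $f_1,\dots,f_5$ under a negative degree reverse lexicographic order in which $X_2$ is now the smallest variable. Because the ordering has changed, these leading monomials differ from those in Lemma \ref{smallest1}; I would first record that with $n_2$ smallest we have inequalities such as $n_2<n_1$ and (from $S$-homogeneity arguments like Remark \ref{remark2}) relations among the $\alpha_i$ that pin down which term of each $f_i$ is leading. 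The indispensability of $f_2,f_3,f_5$ (and of $f_4$ when $\alpha_1-\alpha_{21}>2$), supplied by Corollary \ref{indispensable}, guarantees these binomials sit in every minimal standard basis, so I may read off their leading-monomial divisibility directly.

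Next I would argue each condition by contrapositive. For condition $(1)$, $\alpha_{21}+\alpha_3>\alpha_1$ makes ${\rm LM}(f_3)=X_1^{\alpha_1-\alpha_{21}-1}X_2$ divisible by $X_2$ (indeed it already contains $X_2$), so non-Cohen-Macaulayness follows immediately from the indispensability of $f_3$. Condition $(3)$, $\alpha_4\le\alpha_2+\alpha_3-1$, should come from $f_4$ or $f_5$: if $\alpha_4>\alpha_2+\alpha_3-1$ the monomial $X_1X_2^{\alpha_2-1}X_3^{\alpha_3-1}$ in $f_4$ carries a positive power of $X_2$, and I would either use indispensability of $f_4$ (when $\alpha_1-\alpha_{21}>2$) or, in the degenerate case $\alpha_1=\alpha_{21}+2$, repeat the argument of Lemma \ref{necessary1} locating a replacement generator $g$ with ${\rm LM}(g)\mid{\rm LM}(f_4)$ and using Proposition \ref{minimal} together with Lemma \ref{lemmaS} to derive a contradiction. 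Conditions $(2)$ and $(4)$ are the genuinely new inequalities for this case; I expect $(2)$, $\alpha_{21}+\alpha_3\le\alpha_4$, to arise from comparing $f_5$ against $f_4$ (forcing the $X_2 X_4^{\alpha_4-1}$ term of $f_5$ to be non-leading, i.e.\ $X_2$ must not divide the leading monomial of the indispensable $f_5$), and $(4)$, $\alpha_{21}+\alpha_1\le\alpha_4+\alpha_2-1$, to come from analyzing the leading term of $f_1$ or an $S$-polynomial reduction, since this inequality couples all four generators and does not reduce to a single $f_i$.

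The main obstacle will be conditions $(2)$ and $(4)$, because they are not simply the statement that $X_2$ divides a single indispensable leading monomial. For $(4)$ in particular, the inequality mixes $\alpha_1,\alpha_2,\alpha_{21},\alpha_4$, which strongly suggests it governs a leading-term comparison in an $S$-polynomial or in $f_1=X_1^{\alpha_1}-X_3X_4^{\alpha_4-1}$ rather than in one of the defining binomials alone. I would handle this by translating the failure of $(4)$ into the statement that some standard basis element obtained during the normal-form computation has its leading monomial divisible by $X_2$; the care needed is to verify that such an element genuinely must appear in a \emph{minimal} standard basis, which is where Lemma \ref{lemmaS} and the minimal-degree classification of Proposition \ref{minimal} do the heavy lifting to rule out cancellation or reduction away of the offending term. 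Throughout I would keep the degenerate subcase $\alpha_1=\alpha_{21}+2$ (where $f_4$ loses indispensability) tracked separately, mirroring the bookkeeping in the proof of Lemma \ref{necessary1}.
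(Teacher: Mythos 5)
Your treatment of conditions $(1)$, $(2)$ and $(3)$ matches the paper's: $(1)$ and $(2)$ do come from the indispensability of $f_3$ and $f_5$ (the $X_2$-containing term of each must be non-leading), and for $(3)$ the paper likewise splits on whether $\alpha_1-\alpha_{21}>2$ and, in the degenerate case $\alpha_1=\alpha_{21}+2$, locates a standard basis element $g$ with ${\rm LM}(g)\mid{\rm LM}(f_4)$ and $X_2\nmid{\rm LM}(g)$. One detail to adjust there: since it is now $X_2$ (not $X_1$) that is excluded, ${\rm LM}(g)=X_1^aX_3^b$ with $a\le 1$, $b\le\alpha_3-1$, and the contradiction is obtained from $\deg(f_5)-\deg(g)=(\alpha_{21}+1-a)n_1+(\alpha_3-1-b)n_3\in S$ against the minimality of $d_5\in M_S$ --- not from Lemma \ref{lemmaS} applied as in Lemma \ref{necessary1}. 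This is a routine adaptation, but your sketch does not carry it out.

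The genuine gap is condition $(4)$. You correctly diagnose that it cannot be read off a single defining binomial, but your proposal stops at the intention to ``translate the failure of $(4)$ into the statement that some standard basis element has leading monomial divisible by $X_2$'' without producing that element. The paper's proof first disposes of the case $\alpha_4\ge\alpha_1$, where $(4)$ is automatic because Remark \ref{remark1} gives $\alpha_{21}\le\alpha_2-1$, hence $\alpha_{21}+\alpha_1\le\alpha_4+\alpha_{21}\le\alpha_4+\alpha_2-1$; your sketch misses this reduction. In the remaining case $\alpha_4<\alpha_1$, the key step is the explicit construction of the auxiliary binomial
$f_6=X_1^{\alpha_1+\alpha_{21}}-X_2^{\alpha_2}X_3X_4^{\alpha_4-2}\in I_S$,
valid because $(\alpha_1+\alpha_{21})n_1=\alpha_2n_2+n_3+(\alpha_4-2)n_4$ (this is, up to sign, ${\rm spoly}(f_1,f_2)$, as later exploited in Lemma \ref{std2}). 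If $(4)$ fails, ${\rm LM}(f_6)=X_2^{\alpha_2}X_3X_4^{\alpha_4-2}$ is divisible by $X_2$, so Cohen--Macaulayness forces some standard basis element $f$ with ${\rm LM}(f)\mid{\rm LM}(f_6)$, $X_2\nmid{\rm LM}(f)$, hence ${\rm LM}(f)=X_3^aX_4^b$ with $a\le 1$, $b\le\alpha_4-2$; then $\deg(f_1)-\deg(f)=(1-a)n_3+(\alpha_4-1-b)n_4\in S$ contradicts the minimality of $d_1$. Without identifying $f_6$ and running this minimality argument, your outline for $(4)$ is a plan rather than a proof, and the lemma is not established.
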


\begin{proof}
If tangent cone is Cohen-Macaulay then C$(2.1)$ and C$(2.2)$ comes from the indispensability of $f_3$  and $f_5$. If $\alpha_1>\alpha_{21}+2$, $f_4$ is indispensable, in which case C$(2.3)$ follows. If $\alpha_1=\alpha_{21}+2$, $f_4$ is not indispensable. To prove C$(2.3)$ in this case, assume contrary that  $\alpha_4 > \alpha_2+\alpha_3-1$. Then ${\rm LM}(f_4)=X_1X_2^{\alpha_2-1}X_3^{\alpha_3-1}$. As $f_4 \in I_S$, there exists a binomial $g$ in a minimal standard basis of $I_S$ such that  ${\rm LM}(g) \mid {\rm LM}(f_4)$ and as the tangent cone is Cohen-Macaulay $X_2 \nmid {\rm LM}(g)$. Hence ${\rm LM}(g)=X_1^{a}X_3^{b}$ 
 with $a\leq 1$ and $ b\leq \alpha_3-1$. Then ${\rm deg}(f_5)-{\rm deg}(g)=(\alpha_{21}+1-a)n_1+(\alpha_3-1-b)n_3\in S$ but this contradicts with the minimality of ${\rm deg}(f_5)$. Hence, C$(2.3)$ must hold.
 
 For the last condition,  the result follows immediately if $\alpha_4\geq \alpha_1$, as in this case, $\alpha_{21}+\alpha_1\leq \alpha_4+\alpha_{21}\leq \alpha_4+\alpha_2-1$. When $\alpha_4<\alpha_1$, assume contrary that $\alpha_{21}+\alpha_1> \alpha_4+\alpha_2-1$. Then, as $(\alpha_1+\alpha_{21})n_1=\alpha_2n_2+n_3+(\alpha_4-2)n_4$, the binomial $ f_6= X_1^{\alpha_1+\alpha_{21}}-X_2^{\alpha_2}X_3X_4^{\alpha_4-2} \in I_S$ and  ${\rm LM}(f_6)=X_2^{\alpha_2}X_3X_4^{\alpha_4-2}$ is divisible by $X_2$. As the tangent cone is Cohen-Macaulay there exists a nonzero polynomial $f$ in a minimal standard basis of $I_S$ such that ${\rm LM}(f)\mid {\rm LM}(f_6)$ and  $X_2 \nmid {\rm LM}(f)$. This implies that ${\rm LM}(f)=X_3^{a}X_4^{b}$, where $a \leq 1$ and $b\leq \alpha_4-2$, and that  ${\rm deg}(f_1)-{\rm deg}(f)=(1-a)n_3+(\alpha_4-1-b)n_4$ is also in $S$ which contradicts with the minimality of ${\rm deg}(f_1)$. Hence, C$(2.4)$ must hold.
\end{proof}
Before computing a standard basis, we observe the following.
\begin{remark}\label{remark1}  When $n_2$ is the smallest number in $\{n_1,n_2,n_3,n_4\}$, $\alpha_{21}+1\leq \alpha_{2}$ holds automatically. Indeed, as $f_2$ is $S$-homogeneous, $\alpha_{21}n_1<\alpha_{21}n_1+n_4=\alpha_2n_2 <\alpha_2n_1$ implying $\alpha_{21} < \alpha_{2}$.
\end{remark}
Now, we compute a standard basis under the conditions C$(2.1)$, C$(2.2)$, C$(2.3)$, and C$(2.4)$.

\begin{lemma} \label{std2}
Let $n_2$ be the smallest number in $\{n_1,n_2,n_3,n_4\}$ and

\begin{enumerate}

 \item[(C2.1)]  $\alpha_{21}+\alpha_3\leq \alpha_1$, 
 \item[(C2.2)] $\alpha_{21}+\alpha_3\leq \alpha_4$, 
 \item[(C2.3)] $\alpha_4 \leq \alpha_2+\alpha_3-1$, 
 \item[(C2.4)] $\alpha_{21}+\alpha_1\leq \alpha_4+\alpha_2-1$.

\end{enumerate}
then a minimal standard basis for $I_S$ is
\begin{enumerate}
\item[(i)] $\{f_1,f_2,f_3,f_4,f_5\}$ if $\alpha_1\leq \alpha_4$,
\item[(ii)] $\{f_1,f_2,f_3,f_4,f_5,f_6=X_1^{\alpha_1+\alpha_{21}}-X_2^{\alpha_2}X_3X_4^{\alpha_4-2}\}$ if $\alpha_1 > \alpha_4$, with respect 
\end{enumerate}to negative degree reverse lexicographical ordering with $X_3,X_4>X_1>X_2$.
\end{lemma}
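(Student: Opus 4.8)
The plan is to run the standard basis algorithm (Buchberger/Mora for local orderings) on the generating set $G$, exactly as was done in the proof of Lemma \ref{smallest1}, verifying that $NF({\rm spoly}(f_i,f_j)\mid G)=0$ for every relevant pair. The first step is to pin down the leading monomials with respect to the negative degree reverse lexicographic order with $X_3,X_4>X_1>X_2$. Since $n_2$ is the smallest, $X_2$ is the smallest variable, which changes several leading terms compared to the $n_1$-smallest case: for instance ${\rm LM}(f_2)=X_1^{\alpha_{21}}X_4$ now (not $X_2^{\alpha_2}$), and I would use Remark \ref{remark1} together with conditions $(1)$--$(4)$ to read off ${\rm LM}(f_1),{\rm LM}(f_3),{\rm LM}(f_4),{\rm LM}(f_5)$. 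Condition $(2)$, $\alpha_{21}+\alpha_3\leq\alpha_4$, is what forces ${\rm LM}(f_5)=X_2X_4^{\alpha_4-1}$, and condition $(4)$ will control which monomial of $f_6$ dominates.

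The heart of the argument splits along the dichotomy $\alpha_1\leq\alpha_4$ versus $\alpha_1>\alpha_4$. In case (i), I expect $G=\{f_1,\dots,f_5\}$ to already be a standard basis, so I would tabulate the $\binom{5}{2}=10$ s-polynomials, discard those whose leading monomials are coprime (giving $NF=0$ automatically), and for each remaining pair exhibit an explicit reduction chain ending in $0$, tracking ${\rm ecart}$ at each division step just as in Lemma \ref{smallest1}. In case (ii) the s-polynomial ${\rm spoly}(f_1,\text{--})$ or a reduction will fail to reduce to zero and will produce exactly the new element $f_6=X_1^{\alpha_1+\alpha_{21}}-X_2^{\alpha_2}X_3X_4^{\alpha_4-2}$; here condition $(4)$ guarantees ${\rm LM}(f_6)=X_2^{\alpha_2}X_3X_4^{\alpha_4-2}$ (the $X_2$-divisible monomial), consistent with the proof of Lemma \ref{necessary2}. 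I would then adjoin $f_6$ and re-check all s-polynomials involving $f_6$, again showing each reduces to $0$, which closes the algorithm.

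The main obstacle is the bookkeeping in case (ii): adding $f_6$ enlarges the pair set and each new s-polynomial ${\rm spoly}(f_6,f_j)$ must be reduced by hand, with the reductions genuinely using the inequalities $(1)$--$(4)$ (and $\alpha_1>\alpha_4$) to determine leading terms and to verify that ${\rm ecart}$ does not increase in a way that stalls Mora's normal form. The delicate points are precisely where two monomials of an s-polynomial are comparable only under a specific inequality among the $\alpha_i$; I would isolate each such comparison as a short sub-lemma (mirroring the displayed chains of inequalities like $\alpha_2+\alpha_3\leq\alpha_1+1$ in Lemma \ref{smallest1}) so the leading-term determination is unambiguous. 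Finally, minimality of each standard basis follows because every listed generator has a leading monomial not divisible by any other generator's leading monomial, which I would note is immediate from the explicit forms once all leading monomials are in hand.
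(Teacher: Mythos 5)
Your overall strategy coincides with the paper's: fix the leading monomials under the local order making $X_2$ smallest, run Mora's algorithm on the ten (resp.\ fifteen) pairs, and in the case $\alpha_1>\alpha_4$ obtain $f_6$ as the irreducible remainder of ${\rm spoly}(f_1,f_2)$. However, two of your leading-monomial determinations are backwards, and both are fatal rather than cosmetic. First, for $f_5=X_1^{\alpha_{21}+1}X_3^{\alpha_3-1}-X_2X_4^{\alpha_4-1}$ the two terms have total degrees $\alpha_{21}+\alpha_3$ and $\alpha_4$; in a \emph{negative} degree ordering the term of smaller total degree leads, so condition $(2)$, $\alpha_{21}+\alpha_3\leq\alpha_4$, forces ${\rm LM}(f_5)=X_1^{\alpha_{21}+1}X_3^{\alpha_3-1}$, not $X_2X_4^{\alpha_4-1}$ as you assert (the tie $\alpha_{21}+\alpha_3=\alpha_4$ is also resolved this way because $X_2$ is the smallest variable). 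You appear to have carried over the conclusion of Lemma \ref{smallest1}, whose hypothesis is the reverse inequality $\alpha_4\leq\alpha_{21}+\alpha_3$. Second, for $f_6=X_1^{\alpha_1+\alpha_{21}}-X_2^{\alpha_2}X_3X_4^{\alpha_4-2}$ condition $(4)$, $\alpha_{21}+\alpha_1\leq\alpha_4+\alpha_2-1$, gives ${\rm LM}(f_6)=X_1^{\alpha_1+\alpha_{21}}$; the $X_2$-divisible term leads only under the \emph{negation} of $(4)$, which is precisely the situation excluded in the proof of Lemma \ref{necessary2}, not the one occurring here.

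These are not bookkeeping slips. With your choices $X_2$ would divide ${\rm LM}(f_5)$ and ${\rm LM}(f_6)$, so the criterion of \cite[Lemma 2.7]{AMS} invoked in the theorem that follows would show the tangent cone is \emph{not} Cohen--Macaulay, contradicting the very statement this lemma is meant to support; moreover, the divisibility pattern among the leading monomials changes, so the list of coprime pairs and every reduction chain in Mora's algorithm would have to be redone and would not close as described. The remainder of your plan (identifying ${\rm spoly}(f_1,f_2)$ as the source of $f_6$, tracking ecart at each division, and checking minimality via pairwise non-divisibility of leading monomials) matches the paper, but the proof cannot be completed until the leading monomials of $f_5$ and $f_6$ are corrected.
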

\begin{proof} Omitted as it can be done similarly.
\end{proof}
We are now ready to give the full characterization.
\begin{theorem}
Suppose $n_2$ is the smallest number in $\{n_1,n_2,n_3,n_4\}$. Tangent cone of the monomial curve $C_S$ is Cohen-Macaulay if and only if
\begin{enumerate}
 \item[(C2.1)]  $\alpha_{21}+\alpha_3\leq \alpha_1$, 
 \item[(C2.2)] $\alpha_{21}+\alpha_3\leq \alpha_4$, 
 \item[(C2.3)] $\alpha_4 \leq \alpha_2+\alpha_3-1$, 
 \item[(C2.4)] $\alpha_{21}+\alpha_1\leq \alpha_4+\alpha_2-1$.
\end{enumerate}
\end{theorem}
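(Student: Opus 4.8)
The plan is to prove the equivalence by combining the two lemmas already established in this subsection, exactly mirroring the structure of the theorem proved when $n_1$ is the smallest. The forward direction is immediate from Lemma \ref{necessary2}: if the tangent cone of $C_S$ is Cohen-Macaulay, then conditions $(1)$--$(4)$ must hold. So the real content lies in the converse, where I would assume $(1)$--$(4)$ and deduce Cohen-Macaulayness.

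For the converse, I would invoke Lemma \ref{std2}, which under the hypotheses $(1)$--$(4)$ furnishes an explicit minimal standard basis $G$ for $I_S$ with respect to the negative degree reverse lexicographical order making $X_2$ the smallest variable. The criterion I intend to use is \cite[Lemma 2.7]{AMS}: the tangent cone at the origin is Cohen-Macaulay precisely when the smallest variable $X_2$ does not divide any leading monomial of an element of a minimal standard basis. Thus the remaining task is purely to inspect the leading monomials recorded in the proof of Lemma \ref{std2} and verify that $X_2 \nmid {\rm LM}(f_i)$ for each generator.

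The verification splits according to whether $\alpha_1 \leq \alpha_4$ (case $(i)$) or $\alpha_1 > \alpha_4$ (case $(ii)$). In case $(i)$ the standard basis is $\{f_1,f_2,f_3,f_4,f_5\}$ with leading monomials $X_1^{\alpha_1}$, $X_1^{\alpha_{21}}X_4$, $X_3^{\alpha_3}$, $X_4^{\alpha_4}$, and $X_1^{\alpha_{21}+1}X_3^{\alpha_3-1}$, none of which involves $X_2$. In case $(ii)$ the basis additionally contains $f_6$ with ${\rm LM}(f_1)=X_3X_4^{\alpha_4-1}$ and ${\rm LM}(f_6)=X_1^{\alpha_1+\alpha_{21}}$ (the other leading monomials being as before), and again $X_2$ divides none of them. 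Hence in both cases the Cohen-Macaulayness criterion is met.

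The main subtlety, rather than a genuine obstacle, is ensuring consistency with the leading-monomial computations of Lemma \ref{std2}: in case $(ii)$ the leading term of $f_1$ flips to $X_3X_4^{\alpha_4-1}$ because $\alpha_4<\alpha_1$, so one must read off the correct leading monomials from that lemma rather than from the defining equations. Once the two cases are handled by direct inspection, \cite[Lemma 2.7]{AMS} closes the argument and establishes the equivalence.

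\begin{proof}
If the tangent cone of $C_S$ is Cohen-Macaulay, then $(1)$, $(2)$, $(3)$ and $(4)$ hold by Lemma \ref{necessary2}. Conversely, assume $(1)$--$(4)$ hold. By Lemma \ref{std2}, a minimal standard basis $G$ for $I_S$ with respect to the negative degree reverse lexicographical order with $X_3,X_4>X_1>X_2$ is $\{f_1,f_2,f_3,f_4,f_5\}$ when $\alpha_1\leq \alpha_4$ and $\{f_1,f_2,f_3,f_4,f_5,f_6\}$ when $\alpha_1>\alpha_4$. In either case, inspecting the leading monomials computed in the proof of Lemma \ref{std2}, we see that $X_2\nmid {\rm LM}(f_i)$ for every element of $G$. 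By \cite[Lemma 2.7]{AMS}, the tangent cone is Cohen-Macaulay.
\end{proof}
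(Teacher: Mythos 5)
Your proposal is correct and follows essentially the same route as the paper: forward direction from Lemma \ref{necessary2}, converse from the standard basis of Lemma \ref{std2} (noting that condition $(4)$ forces ${\rm LM}(f_6)=X_1^{\alpha_1+\alpha_{21}}$ in case $(ii)$), and the criterion of \cite[Lemma 2.7]{AMS} applied to the smallest variable $X_2$. No gaps.
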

\begin{proof}
If tangent cone is Cohen-Macaulay then C$(2.1)$, C$(2.2)$, C$(2.3)$ and C$(2.4)$ must hold by Lemma \ref{necessary2}. 
If C$(2.1)$, C$(2.2)$, C$(2.3)$ and C$(2.4)$ hold, then a minimal standard basis with respect to the negative degree reverse lexicographic ordering making $X_2$ the smallest variable is $G=\{f_1,f_2,f_3,f_4,f_5\}$ in the case $\alpha_4 \geq \alpha_1$ and $G=\{f_1,f_2,f_3,f_4,f_5,f_6\}$ in the case $\alpha_4<\alpha_1$ from Lemma \ref{std2}. $X_2$ does not divide ${\rm LM}(f_i)$ in both cases, so the tangent cone is Cohen-Macaulay by \cite[Lemma 2.7]{AMS}.
\end{proof}

\subsection{Cohen Macaulayness of the tangent cone when $n_3$ is smallest} In this section, we deal with the Cohen Macaulayness of the tangent cone when $n_3$ is the smallest number in $\{n_1,n_2,n_3,n_4\}$. As before, we get the necessary conditions first.
\begin{lemma} \label{necessary3} Suppose $n_3$ is the smallest number in $\{n_1,n_2,n_3,n_4\}$. If the tangent cone of the monomial curve $C_S$ is Cohen-Macaulay, then the following must hold
\begin{enumerate}
 \item[(C3.1)]  $\alpha_1 \leq \alpha_4$, 
 \item[(C3.2)]  $\alpha_4 \leq \alpha_{21}+\alpha_3$, 
 \item[(C3.3i)]  $\alpha_4 \leq \alpha_2+\alpha_3-1$ if $\alpha_1 - \alpha_{21} >2$,
  \item[(C3.3ii)]  $\alpha_4 \leq \alpha_2+2\alpha_3-3$ if $\alpha_1 - \alpha_{21} =2$, 

 \end{enumerate}
\end{lemma}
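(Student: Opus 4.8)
The plan is to follow exactly the template established in Lemmas \ref{necessary1} and \ref{necessary2}: for each claimed inequality, I assume the reverse strict inequality holds, produce from it a binomial in $I_S$ whose leading monomial (under a negative degree reverse lexicographic order making $X_3$ the smallest variable) is divisible by $X_3$, and then invoke \cite[Lemma 2.7]{AMS} to conclude that the tangent cone fails to be Cohen-Macaulay. The key preliminary observation, analogous to Remarks \ref{remark2} and \ref{remark1}, is that since $n_3$ is the smallest the $S$-homogeneity of $f_3$ forces $\alpha_1-\alpha_{21}-1 < \alpha_3$, i.e. $\alpha_3 \geq \alpha_1 - \alpha_{21}$, so that ${\rm LM}(f_3)=X_3^{\alpha_3}$ is already free of $X_3$-divisibility obstructions and $f_3$ causes no trouble; the same homogeneity reasoning will pin down which monomial of each $f_i$ is leading.

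First I would dispatch condition $(1)$. The binomial $f_1=X_1^{\alpha_1}-X_3X_4^{\alpha_4-1}$ is indispensable by Corollary \ref{indispensable} (regardless of whether $\alpha_1-\alpha_{21}=2$, since $f_1$ lies in the indispensable set in both cases), so it sits in every standard basis; if $\alpha_4<\alpha_1$ then the homogeneity comparison $\alpha_1 n_1 = n_3+(\alpha_4-1)n_4$ with $n_3$ smallest forces ${\rm LM}(f_1)=X_3X_4^{\alpha_4-1}$, which is divisible by $X_3$, giving the contradiction. For condition $(2)$, I would use the indispensable binomial $f_5$: assuming $\alpha_4 > \alpha_{21}+\alpha_3$ makes ${\rm LM}(f_5)=X_1^{\alpha_{21}+1}X_3^{\alpha_3-1}$ divisible by $X_3$ (here $\alpha_3>1$ is guaranteed by the defining inequalities $\alpha_i>1$), again contradicting Cohen-Macaulayness via \cite[Lemma 2.7]{AMS}.

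Condition $(3)$ is where I expect the main obstacle, and it is the reason the statement splits on the value of $\alpha_1-\alpha_{21}$. When $\alpha_1-\alpha_{21}>2$, the binomial $f_4$ is indispensable (Corollary \ref{indispensable}), so assuming $\alpha_4>\alpha_2+\alpha_3-1$ makes ${\rm LM}(f_4)=X_1X_2^{\alpha_2-1}X_3^{\alpha_3-1}$, which is divisible by $X_3$ since $\alpha_3>1$, and \cite[Lemma 2.7]{AMS} finishes it. The delicate case is $\alpha_1-\alpha_{21}=2$, where $f_4$ is no longer indispensable and the relevant threshold weakens to $\alpha_2+2\alpha_3-3$. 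Mirroring the corresponding argument in Lemma \ref{necessary1}, I would assume $\alpha_4>\alpha_2+2\alpha_3-3$ and exhibit a binomial $g$ in a minimal standard basis with ${\rm LM}(g)\mid {\rm LM}(f_4)$; Cohen-Macaulayness forces $X_3\nmid {\rm LM}(g)$, so ${\rm LM}(g)=X_1^aX_2^b$ with $a\leq 1,\ b\leq \alpha_2-1$.

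The crux is then to extract a contradiction from the degree of $g$ using Proposition \ref{minimal} and Lemma \ref{lemmaS}. Since $\alpha_1-\alpha_{21}=2$ we have $M_S=\{d_1,d_2,d_3,d_5\}$, and I would locate ${\rm deg}(g)$ relative to these minimal binomial degrees and the weakened bound $\alpha_2+2\alpha_3-3$, then compute a difference of the form $v_k n_k - v_\ell n_\ell$ with $0<v_k<\alpha_k$ and $0<v_\ell<\alpha_\ell$ that must lie in $S$, contradicting Lemma \ref{lemmaS}. The bookkeeping here—determining precisely which Betti degree ${\rm deg}(g)$ can fall between, and arranging the two exponents to stay strictly within the required ranges so that Lemma \ref{lemmaS} applies—is the technical heart of the argument; the shift from $\alpha_3-1$ to $2\alpha_3-3$ in the threshold is exactly what this constraint analysis produces, reflecting that in the $\alpha_1-\alpha_{21}=2$ regime a combination involving two powers of $X_3$ becomes available in $I_S$.
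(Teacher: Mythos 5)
Your arguments for $(1)$, $(2)$, and the case $\alpha_1-\alpha_{21}>2$ of $(3)$ are correct and are exactly the paper's (leading monomial of an indispensable binomial divisible by $X_3$, then \cite[Lemma 2.7]{AMS}). But there are two connected problems. First, your preliminary observation gets ${\rm LM}(f_3)$ backwards: since $n_3$ is smallest, $S$-homogeneity of $f_3$ gives $\alpha_1-\alpha_{21}<\alpha_3$ (Remark \ref{remark3}), so in a \emph{local} ordering the monomial of \emph{smaller} total degree leads, i.e.\ ${\rm LM}(f_3)=X_1^{\alpha_1-\alpha_{21}-1}X_2$, not $X_3^{\alpha_3}$. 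The conclusion that $f_3$ is harmless survives, but missing this fact is what derails your treatment of the delicate case: when $\alpha_1-\alpha_{21}=2$ we have ${\rm LM}(f_3)=X_1X_2$, which divides ${\rm LM}(f_4)=X_1X_2^{\alpha_2-1}X_3^{\alpha_3-1}$.

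Because of this, your plan for $(3)$ when $\alpha_1-\alpha_{21}=2$ cannot work. You propose to take a standard-basis element $g$ with ${\rm LM}(g)\mid{\rm LM}(f_4)$ and $X_3\nmid{\rm LM}(g)$, hence ${\rm LM}(g)=X_1^aX_2^b$, and then contradict Lemma \ref{lemmaS} via $\deg_S(g)$. But $g=f_3$ already satisfies all of these requirements ($a=b=1$, $\deg_S(g)=d_3$), and its existence is perfectly consistent with Cohen--Macaulayness --- indeed the tangent cone \emph{can} be Cohen--Macaulay in this regime (see Lemma \ref{std3}(ii) and the corollary after it), so no contradiction can be extracted from such a $g$. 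The paper's proof goes the other way: since ${\rm LM}(f_3)\mid{\rm LM}(f_4)$, the binomial $f_4$ drops out of every minimal standard basis, yet some binomial of $S$-degree $d_4$ must remain. One then computes the fiber $\deg_S^{-1}(d_4)=\{X_4^{\alpha_4},\,X_1X_2^{\alpha_2-1}X_3^{\alpha_3-1},\,X_2^{\alpha_2-2}X_3^{2\alpha_3-1}\}$, using that $d_3$ is the only Betti degree below $d_4$ together with $d_4-2d_3=(\alpha_2-2)n_2-n_3\notin S$ (Lemma \ref{lemmaS}); this forces $f'_4=X_4^{\alpha_4}-X_2^{\alpha_2-2}X_3^{2\alpha_3-1}$ into a minimal standard basis. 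The threshold $\alpha_2+2\alpha_3-3$ is precisely the total degree of $X_2^{\alpha_2-2}X_3^{2\alpha_3-1}$, so under $\alpha_4>\alpha_2+2\alpha_3-3$ that monomial is ${\rm LM}(f'_4)$ and is divisible by $X_3$, which is the actual contradiction. Your closing sentence gestures at this monomial, but the fiber computation that produces it --- the genuine content of this case --- is absent from your argument.
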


\begin{proof}
If tangent cone is Cohen-Macaulay then C$(3.1)$ and C$(3.2)$ comes from the indispensability of $f_1$  and $f_5$. If $\alpha_1>\alpha_{21}+2$, $f_4$ is indispensable, in which case C$(3.3i)$ follows. If $\alpha_1=\alpha_{21}+2$, $f_4$ is not indispensable. To prove C$(3.3ii)$ in this case, assume contrary that  $\alpha_4 > \alpha_2+2\alpha_3-3$. Then $\alpha_4 > \alpha_2+\alpha_3-1$ and ${\rm LM}(f_4)=X_1X_2^{\alpha_2-1}X_3^{\alpha_3-1}$. As ${\rm LM}(f_3)=X_1X_2 \mid {\rm LM}(f_4)$, $f_4$ can not be in a minimal standard basis of $I_S$. It can not be in a minimal generating set since a minimal generating set would lie in a minimal standard basis. Since Betti $S$-degrees are invariant, there must be a binomial of degree $d_4$ in a minimal generating set. We prove that $f'_4=X_4^{\al_4}-X_2^{\al_2-2}X_3^{2\al_3-1}$ must belong to a minimal generating set and so to a minimal standard basis. This will follow from \cite{CKT} and the claim that 
$$\deg_S^{-1}(d_4)=\{X_4^{\al_4}\} \cup \{X_1X_2^{\alpha_2-1}X_3^{\alpha_3-1},X_2^{\al_2-2}X_3^{2\al_3-1}\}.$$
In order to prove the claim above, take $m\in \deg_S^{-1}(d_4)$. Since $d_3$ is the only $S$-degree smaller than $d_4$ and $\deg_S^{-1}(d_3)=\{X_3^{\al_3},X_1X_2\}$, it follows that $X_3^{\al_3} \mid m$ or $X_1X_2 \mid m$ if $\deg_S(m)=d_4$. If $X_3^{\al_3} \mid m$, then  $m=X_3^{\al_3} m'$. If $m'\neq X_2^{\alpha_2-2}X_3^{\alpha_3-1}$, then $m' - X_2^{\alpha_2-2}X_3^{\alpha_3-1} \in I_S$, as this binomial is $S$-homogeneous of $S$-degree $d=d_4-d_3$. As $d_3$ is the only $S$-degree smaller than $d_4$, it follows that $d_3<_S d<_S d_4$. So, $2d_3<_S d_4$. On the other hand, by Lemma \ref{lemmaS}, we have
$$d_4-2d_3=n_1+(\al_2-1)n_2+(\al_3-1)n_3-n_1-n_2-\al_3n_3=(\al_2-2)n_2-n_3 \notin S.$$
Thus, $m' = X_2^{\alpha_2-2}X_3^{\alpha_3-1}$ and so $m=X_2^{\al_2-2}X_3^{2\al_3-1}$. By the same argument, if $X_1X_2 \mid m$ then $m=X_1X_2^{\alpha_2-1}X_3^{\alpha_3-1}$, hence the claim follows.

If $\alpha_4 > \alpha_2+2\alpha_3-3$, ${\rm LM}(f'_4)=X_2^{\alpha_2-2}X_3^{2\alpha_3-1}$ is divisible by $X_3$, contradicting to the Cohen-Macaulayness of the tangent cone. So, C$(3.3ii)$ follows.
\end{proof}

Before computing a standard basis, we observe the following.
\begin{remark}\label{remark3}  When $n_3$ is the smallest number, $\al_1-\alpha_{21} < \alpha_{3}$ holds automatically. Indeed, as $f_3$ is $S$-homogeneous, $(\al_1-\alpha_{21})n_3<(\al_1-\alpha_{21}-1)n_1+n_2=\alpha_3n_3 $.
\end{remark}
Now, we compute a standard basis.

\begin{lemma} \label{std3}
Let $n_3$ be the smallest number in $\{n_1,n_2,n_3,n_4\}$ and $\alpha_2 \leq \alpha_{21}+1$,
then a minimal standard basis for $I_S$ is
\begin{enumerate}
\item[(i)] $\{f_1,f_2,f_3,f_4,f_5,f_6\}$ if $C(3.1)$, $C(3.2)$ and $C(3.3i)$ hold
\item[(ii)] $\{f_1,f_2,f_3,f'_4,f_5,f_6\}$ when $C(3.1)$, $C(3.2)$ and $C(3.3ii)$ hold, with respect 
\end{enumerate} to negative degree reverse lexicographical ordering with $X_2>X_1,X_4>X_3$, where $f_6=X_1^{\alpha_1-1}X_4-X_2^{\alpha_2-1}X_3^{\alpha_3}$.
\end{lemma}

\begin{proof}Omitted as it can be done similarly.
\end{proof}

We are now ready to give a list of sufficient conditions.
\begin{corollary}
Let $n_3$ is the smallest number and and $\alpha_2 \leq \alpha_{21}+1$.
\begin{enumerate}
\item[(i)] If C$(3.1)$, C$(3.2)$ and C$(3.3i)$ hold, then the tangent cone of the monomial curve $C_S$ is Cohen-Macaulay. 
\item[(ii)] When C$(3.1)$, C$(3.2)$ and C$(3.3ii)$ hold, the tangent cone of the monomial curve $C_S$ is Cohen-Macaulay if and only if $\alpha_1 \leq \alpha_2+\alpha_3-1$.
\end{enumerate}
\end{corollary}
\begin{proof} (i) If C$(3.1)$, C$(3.2)$ and C$(3.3i)$ hold, then a minimal standard basis with respect to the negative degree reverse lexicographic ordering making $X_3$ the smallest variable is $G=\{f_1,f_2,f_3,f_4,f_5,f_6\}$  from Lemma \ref{std3}. $X_3$ does not divide ${\rm LM}(f_i)$, so the tangent cone is Cohen-Macaulay by \cite[Lemma 2.7]{AMS}. \\
(ii) When C$(3.1)$, C$(3.2)$ and C$(3.3ii)$ hold, then a minimal standard basis with respect to the negative degree reverse lexicographic ordering making $X_3$ the smallest variable is $G=\{f_1,f_2,f_3,f'_4,f_5,f_6\}$  from Lemma \ref{std3}. $X_3$ does not divide ${\rm LM}(f_i)$, for $i=1,\dots,5$, so the tangent cone is Cohen-Macaulay by \cite[Lemma 2.7]{AMS} if and only if $X_3$ does not divide ${\rm LM}(f_6)$ if and only if $\alpha_1 \leq \alpha_2+\alpha_3-1$.
\end{proof}

We finish the section by illustrating that $\alpha_2 \leq \alpha_{21}+1$ is not a necessary condition. 
\begin{example}
Let $(\alpha_{21},\alpha_1,\alpha_2,\alpha_3,\alpha_4)=(2,4,5,4,5)$. Then $(n_1,n_2,n_3,n_4)=(81,59,28,74)$.  SINGULAR computes a minimal standard basis for $I_S$ as $\{X_1X_2-X_3^5, X_1^2X_4-X_2^{4}, X_1^4-X_3X_4^4, X_2^5-X_1X_3^5X_4, X_2X_4^4-X_1^3X_3^4, X_4^5-X_1X_2^3X_3^4\}$ and thus  $I_{S^*}$ is generated by
 $G_*= \{ X_1X_2, X_1^2X_4, X_1^4, X_2^5, X_2X_4^4, X_4^5\}$. As $X_3$ does not divide these elements, the tangent cone is Cohen-Macaulay from \cite[Lemma 2.7]{AMS}.
\end{example}

\subsection{Cohen-Macaulayness of the tangent cone when $n_4$ is smallest} We get some necessary conditions first as before.
\begin{lemma}
Suppose $n_4$ is the smallest number in $\{n_1,n_2,n_3,n_4\}$. If the tangent cone of the monomial curve $C_S$ is Cohen-Macaulay then 
\begin{enumerate}
 \item[(C4.1)] $\alpha_1\leq \alpha_4$,
\item[(C4.2)] $\alpha_2 \leq \alpha_{21}+1$,
\item[(C4.3)] $\alpha_3+\alpha_{21}\leq \alpha_4$.
\end{enumerate}
\end{lemma}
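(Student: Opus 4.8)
The plan is to mimic the structure of the three preceding necessary-condition lemmas (Lemma \ref{necessary1}, Lemma \ref{necessary2}, Lemma \ref{necessary3}), since the underlying mechanism is identical: under the hypothesis that $n_4$ is the smallest variable, I choose a negative degree reverse lexicographic ordering making $X_4$ the smallest variable, and I invoke \cite[Lemma 2.7]{AMS}, which says that if the tangent cone is Cohen-Macaulay then the smallest variable $X_4$ cannot divide any leading monomial of a minimal standard basis. The strategy for each of the three inequalities is to assume the inequality fails, deduce that $X_4$ divides the leading monomial of some element forced to lie in every minimal standard basis, and extract a contradiction.

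For condition (1), I would argue that $f_1$ is indispensable by Corollary \ref{indispensable} (independently of the value of $\alpha_1-\alpha_{21}$, since $f_1$ appears in both lists there), so it sits in every standard basis; if $\alpha_1>\alpha_4$ then $\mathrm{LM}(f_1)=X_1^{\alpha_1}$ would instead be $X_3X_4^{\alpha_4-1}$, which is divisible by $X_4$, contradicting Cohen-Macaulayness. Thus $\alpha_1\le\alpha_4$. Condition (2) is the cleanest: $f_2$ is indispensable by Corollary \ref{indispensable}, and if $\alpha_2>\alpha_{21}+1$ then $\mathrm{LM}(f_2)=X_1^{\alpha_{21}}X_4$ is divisible by $X_4$, again contradicting \cite[Lemma 2.7]{AMS}; hence $\alpha_2\le\alpha_{21}+1$. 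Condition (3) follows the same template using $f_5$, which is indispensable in both cases of Corollary \ref{indispensable}: if $\alpha_3+\alpha_{21}>\alpha_4$, equivalently $\alpha_{21}+\alpha_3-1\ge\alpha_4$, then $\mathrm{LM}(f_5)$ becomes $X_2X_4^{\alpha_4-1}$ rather than $X_1^{\alpha_{21}+1}X_3^{\alpha_3-1}$, and this is divisible by $X_4$, so the tangent cone is not Cohen-Macaulay.

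The main subtlety I anticipate, and the place I would be most careful, is verifying that for each $f_i$ the \emph{other} monomial genuinely becomes the leading monomial under the chosen ordering precisely when the stated inequality is violated, and not merely that it is divisible by $X_4$. Concretely, I must confirm the $S$-degree comparisons (the exponent-sum comparisons in the degree reverse lexicographic refinement) that determine which term of the binomial is leading; for instance for $f_5$ I need $(\alpha_{21}+1)+(\alpha_3-1)$ versus $1+(\alpha_4-1)$, i.e.\ $\alpha_{21}+\alpha_3$ versus $\alpha_4$, to flip the leading term exactly at the threshold. Unlike the $n_2$ and $n_3$ cases, here none of the three conditions requires the delicate auxiliary argument (building an alternative binomial $f_4'$ or $f_6$ when $f_4$ fails to be indispensable), because all three inequalities are pinned directly to the indispensable binomials $f_1,f_2,f_5$. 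I therefore expect this lemma to be the most straightforward of the four necessary-condition lemmas, with the only real work being the bookkeeping of leading monomials under the ordering that makes $X_4$ smallest.

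\begin{proof}
Suppose the tangent cone of $C_S$ is Cohen-Macaulay and fix a negative degree reverse lexicographical ordering making $X_4$ the smallest variable. By \cite[Lemma 2.7]{AMS}, $X_4$ does not divide the leading monomial of any element of a minimal standard basis of $I_S$. By Corollary \ref{indispensable}, the binomials $f_1$, $f_2$ and $f_5$ are indispensable, hence each appears in every minimal standard basis of $I_S$.

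To prove $(1)$, assume $\alpha_1>\alpha_4$. Since $f_1$ is $S$-homogeneous, the two monomials $X_1^{\alpha_1}$ and $X_3X_4^{\alpha_4-1}$ have the same $S$-degree, and as $n_4$ is the smallest the exponent sum forces $\mathrm{LM}(f_1)=X_3X_4^{\alpha_4-1}$, which is divisible by $X_4$. This contradicts Cohen-Macaulayness, so $\alpha_1\leq\alpha_4$.

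To prove $(2)$, assume $\alpha_2>\alpha_{21}+1$. Then $\alpha_{21}+1<\alpha_2$ gives $\mathrm{LM}(f_2)=X_1^{\alpha_{21}}X_4$, which is divisible by $X_4$, again contradicting \cite[Lemma 2.7]{AMS}. Hence $\alpha_2\leq\alpha_{21}+1$.

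Finally, to prove $(3)$, assume $\alpha_3+\alpha_{21}>\alpha_4$, that is $\alpha_{21}+\alpha_3\geq\alpha_4+1$. Comparing the exponent sums $(\alpha_{21}+1)+(\alpha_3-1)=\alpha_{21}+\alpha_3$ and $1+(\alpha_4-1)=\alpha_4$ of the two monomials of $f_5$, we get $\mathrm{LM}(f_5)=X_2X_4^{\alpha_4-1}$, which is divisible by $X_4$. This contradicts the Cohen-Macaulayness of the tangent cone, so $\alpha_3+\alpha_{21}\leq\alpha_4$.
\end{proof}
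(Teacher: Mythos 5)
Your proof is correct and takes the same approach as the paper, whose entire proof is the one-line remark that the three conditions ``follow immediately from the indispensabilities of $f_1$, $f_2$ and $f_5$ respectively''; you have simply spelled out the leading-monomial comparisons (total degree $\alpha_1$ vs.\ $\alpha_4$, $\alpha_2$ vs.\ $\alpha_{21}+1$, $\alpha_{21}+\alpha_3$ vs.\ $\alpha_4$) and the appeal to \cite[Lemma 2.7]{AMS} that the authors left implicit.
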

\begin{proof}
The results follow immediately from the indispensabilities of $f_1$, $f_2$ and $f_5$ respectively.
\end{proof}
\begin{remark}\label{remark4}
If $n_4$ is the smallest number in $\{n_1,n_2,n_3,n_4\}$ then $\alpha_4>\alpha_{2}+\alpha_3-1$. Indeed, as $f_4$ is $S$-homogeneous and $n_4$ is the smallest number in $\{n_1,n_2,n_3,n_4\}$ $\alpha_4n_4=n_1+(\alpha_2-1)n_2+(\alpha_3-1)n_3>(\alpha_2+\alpha_3-1)n_4$ implying $\alpha_4>\alpha_{2}+\alpha_3-1$.
\end{remark}

\begin{lemma}\label{lemma4.2}
Let $n_4$ be the smallest number in $\{n_1,n_2,n_3,n_4\}$ and $ \alpha_3 \leq \alpha_1-\alpha_{21}$. If the conditions C$(4.1)$, C$(4.2)$ and C$(4.3)$ hold, then $\{f_1,f_2,f_3, f_4,f_5 \}$  is a minimal standard basis for $I_S$ with respect to negative degree reverse lexiographical ordering with $X_3>X_1,X_2>X_4$.
\end{lemma}
\begin{proof}Omitted as it can be done similarly.
\end{proof}
\begin{corollary} Let $n_4$ be the smallest number in $\{n_1,n_2,n_3,n_4\}$ and $ \alpha_3 \leq \alpha_1-\alpha_{21}$. If the conditions C$(4.1)$, C$(4.2)$ and C$(4.3)$ hold, then the tangent cone of $C_S$ is Cohen-Macaulay.  
\end{corollary}
\begin{proof}
By hypothesis $\{f_1,f_2,f_3, f_4,f_5 \}$  is a minimal standard basis for $I_S$ with respect to negative degree reverse lexiographical ordering with $X_4$ the smallest variable from Lemma \ref{lemma4.2} and $X_4\nmid {\rm LM}(f_i)$ for $i=1,2,3,4,5$. Thus, it follows from \cite[Lemma 2.7]{AMS} that the tangent cone is Cohen-Macaulay.
\end{proof}
However, the tangent cone may be Cohen-Macaulay even if $\alpha_3 > \alpha_1-\alpha_{21}$.
\begin{example}
Let $(\alpha_{21},\alpha_1,\alpha_2,\alpha_3,\alpha_4)=(4,7,3,4,9)$. Then $(n_1,n_2,n_3,n_4)=(97,154,87,74)$.  SINGULAR computes a minimal standard basis for $I_S$ as $\{X_1^{2}X_2-X_3^4, X_2^3-X_1^{4}X_4, X_1X_2^2X_3^3-X_4^9, X_2^2X_3^4-X_1^6X_4, X_1^7-X_3X_4^8, X_1^5X_3^3-X_2X_4^8, X_2X_3^7-X_1X_4^9, X_1^3X_3^7-X_2^2X_4^8, X_3^{11}-X_1^3X_4^9\}$ and so the ideal $I_{S^*}$ is generated by the set \newline
 $G_*= \{X_1^{2}X_2, X_2^3, X_1X_2^2X_3^3, X_2^2X_3^4, X_1^7, X_1^5X_3^3, X_2X_3^7, X_1^3X_3^7-X_2^2X_4^8,  X_3^{11}\}$. As $X_4$ does not divide elements, the tangent cone is Cohen-Macaulay from \cite[Lemma 2.7]{AMS}.
\end{example}

\section*{Acknowledgements} We would like to thank F. Arslan, A. Katsampekis and M. Morales for very helpful
suggestions. We also thank an anonymous referee for his/her comments that improved the presentation. Most of the examples are computed by using the computer algebra systems Macaulay $2$ and Singular, see \cite{Mac2,singular}.

\end{document}